\newtheorem{theorem}{Theorem}[section]
\newaliascnt{lemma}{theorem}
\newtheorem{lemma}[lemma]{Lemma}
\newaliascnt{conjecture}{theorem}
\newtheorem{conjecture}[conjecture]{Conjecture}
\newaliascnt{proposition}{theorem}
\newtheorem{proposition}[proposition]{Proposition}
\newaliascnt{corollary}{theorem}
\newtheorem{corollary}[corollary]{Corollary}
\newaliascnt{problem}{theorem}
\newaliascnt{question}{theorem}
\newtheorem{question}[question]{Question}
\newaliascnt{claim}{theorem}
\theoremstyle{definition}
\newaliascnt{definition}{theorem}
\newtheorem{definition}[definition]{Definition}
\newaliascnt{example}{theorem}
\theoremstyle{remark}
\newaliascnt{remark}{theorem}
\newtheorem{remark}[remark]{Remark}
\newaliascnt{remarks}{theorem}
\numberwithin{equation}{section}
\numberwithin{figure}{section}
\def\ol{\overline}
\def\ra{\rightarrow}
\def\({$($}
\def\){$)$}
\def\Pic{\text{{\rm Pic\,}}}
\def\rank{\text{{\rm rank\,}}}
\newcommand{\Ecal}{\mathcal{E}}
\newcommand{\Fcal}{\mathcal{F}}
\newcommand{\Mcal}{\mathcal{M}}
\newcommand{\Acal}{\mathcal{A}}
\newcommand{\Hcal}{\mathcal{H}}
\newcommand{\Jac}{\mathrm{Jac}}
\newcommand{\Tcal}{\mathcal{T}}
\newcommand{\Cbb}{\mathbb{C}}
\newcommand{\Fbb}{\mathbb{F}}
\newcommand{\Qbb}{\mathbb{Q}}
\newcommand{\Rbb}{\mathbb{R}}
\newcommand{\Gbf}{\mathbf{G}}
\newcommand{\Hbf}{\mathbf{H}}
\newcommand{\Ubf}{\mathbf{U}}
\newcommand{\Bbar}{{\ol B}}
\newcommand{\Sbar}{{\ol S}}
\newcommand{\fbar}{{\ol f}}
\newcommand{\pfrak}{\mathfrak{p}}
\newcommand{\Res}{\mathrm{Res}}
\newcommand{\bsh}{\backslash}
\newcommand{\ad}{\mathrm{ad}}
\newcommand{\der}{\mathrm{der}}
\newcommand{\isom}{\simeq}
\newcommand{\mono}{\hookrightarrow}
\newcommand{\Sp}{\mathrm{Sp}}
\newcommand{\GL}{\mathrm{GL}}
\newcommand{\SU}{\mathrm{SU}}
\newcommand{\inv}{{-1}}
\newcommand{\half}{\frac{1}{2}}
\newcommand{\Ccal}{\mathcal{C}}
\newcommand{\kbar}{{\bar{k}}}
\newcommand{\Jcal}{\mathcal{J}}
\newcommand{\Sing}{{\mathrm{Sing}}}
\newcommand{\Spec}{{\mathrm{Spec}}}
\newcommand{\Nbb}{\mathbb{N}}
\newcommand{\xbar}{\bar{x}}
\newcommand{\Qac}{\bar{\Qbb}}
\newcommand{\Fal}{\mathrm{Fal}}
\newcommand{\MT}{\mathrm{MT}}
\newcommand{\Disc}{\mathrm{Disc}}
\newcommand{\wk}{\mathrm{wk}}
\title{On a question of Ekedahl and Serre}
\author{Ke CHEN}
\address{Department of mathematics, Nanjing University, Nanjing, China, 210093}
\email{kechen@nju.edu.cn}
\author{Xin Lu}
\address{Institut f\"ur Mathematik, Universit\"at Mainz, Mainz, Germany, 55099}
\email{x.lu@uni-mainz.de}
\author{Kang Zuo}
\address{Institut f\"ur Mathematik, Universit\"at Mainz, Mainz, Germany, 55099}
\email{zuok@uni-mainz.de}
\thanks{This work is supported by SFB/Transregio 45 Periods, Moduli Spaces and Arithmetic of Algebraic Varieties of the DFG (Deutsche Forschungsgemeinschaft),
	and partially supported by National Key Basic Research Program of China (Grant No. 2013CB834202) and National Natural Science Foundation of China, Grant No. 11301495, and Fundamental Research Funds for the Central Universities,  Nanjing University, no. 0203-14380009}
\subjclass[2010]{14G40, 14H42}
\keywords{Completely decomposable Jacobians, Sato-Tate equidistribution.}
\begin{document}

\maketitle

\begin{abstract}
	In this paper we study various aspects of the Ekedahl-Serre problem. We formulate questions of Ekedahl-Serre type and Coleman-Oort type for general weakly special subvarieties in the Siegel moduli space, propose a conjecture relating these two questions, and provide examples supporting these questions. The main new result is an upper bound of genera for curves over number fields whose Jacobians are isogeneous to products of elliptic curves satisfying the Sato-Tate equidistribution, and we also refine previous results showing that certain weakly special subvarieties only meet the open Torelli locus in at most finitely many points.
\end{abstract}

\tableofcontents

\section{Introduction}\label{sec-introduction}

This paper is dedicated to a question of Ekedahl and Serre \cite{es-93} on  algebraic curves with completely decomposable Jacobians when the genus tends to  infinity. Here a Jacobian, or more generally an abelian variety, is said to be completely decomposable (also called totally split in the literature)
if it is isogenous to a product of elliptic curves.
In \cite{es-93} Ekedahl and Serre have constructed various examples of algebraic curves
with completely decomposable Jacobians, and they asked whether the genera of the algebraic curves
with completely decomposable Jacobians are bounded from above.

The question can be reformulated in terms of Shimura varieties. One considers $M$ the Shimura subvariety in $\Acal_g$ parametrizing abelian varieties isomorphic to a product of elliptic curves respecting the principal polarizations. It is clear that $M$ is isomorphic to a $g$-fold product of modular curves upon the choice of a suitable level structure imposed on $\Acal_g$. An abelian variety isogenous to a product of elliptic curves thus corresponds to a point in some $T_q(M)$, where $T_q$ stands for the Hecke translation by  $q\in\Sp_{2g}(\Qbb)$ in $\Acal_g$. The question of Ekedahl and Serre is concerned with the existence of a lower bound for $g$ such that the intersection $$\Tcal_g^\circ\bigcap\bigg(\bigcup_{q\in\Sp_{2g}(\Qbb)}T_q(M)\bigg)$$ is finite or even empty, where $\Tcal_g^\circ$ is the open Torelli locus parameterizing Jacobians of smooth projective curves of genus $g$.

The huge union $\bigcup_{q\in\Sp_{2g}(\Qbb)}T_q(M)$ appears inconvenient to deal with, and the situation would be considerably simplified if one could reduce the intersection above to a ``smaller'' one like $$\Tcal_g^\circ\bigcap\big(T_{q_1}(M)\cup\cdots\cup T_{q_N}(M)\big)$$ involving only finitely many Hecke translates. This would transform the question of Ekedahl and Serre to a problem in the flavor of Coleman and Oort: the original Coleman-Oort conjecture, cf.\cite{mo-13}, predicts that the when $g$ is large enough, the intersection of $\Tcal_g^\circ$ with any Shimura subvariety in $\Acal_g$ of strictly positive dimension should contain at most finitely many CM points, and here we ask for condition under which the intersection $\Tcal_g^\circ\bigcap(T_{q_1}(M)\cup\cdots\cup T_{q_N}(M))$, with non-CM points allowed, is finite or even empty.

We expect that these questions still make good sense for more general Shimura subvarieties and even weakly special subvarieties in $\Acal_g$. In this paper we present evidence supporting the perspective. The materials are organized as follows:

\autoref{sec-questions} formulates the questions of Ekedahl-Serre type and of Coleman-Oort type for weakly special subvarieties in $\Acal_g$, and we propose a conjecture relating these two questions. The notion of weakly special subvarieties is recalled and the relation between our questions and the Zilber-Pink conjecture is briefly explained.

\autoref{sec-unitary} provides evidence toward the questions for weakly special subvarieties in strictly positive dimension. We extend our previous results on the Coleman-Oort conjecture to show that certain weakly special subvarieties only meet $\Tcal_g^\circ$ in at most finitely many points: \begin{theorem}
	Let $Z\subset\Acal_g$ be a weakly special subvariety with weak Mumford-Tate group $\Gbf$ such that $\Gbf\otimes_\Qbb\Rbb\isom\prod_{i=1,\cdots,r}\SU(p_i,q_i)$ with $p_i+q_i=n$  independent of $i=1,\cdots,r$, satisfying the following conditions: \begin{itemize}
		
		\item $p_i\geq q_i\geq 0$ and $\max_{i=1,\cdots,r}\{\frac{q_i}{n}\}<\frac{1}{12}$;
		
		\item $q_{i_0}\geq 2$ for some $i_0\in\{1,\cdots,r\}$. 
		
		\end{itemize} Then $Z$ only meets $\Tcal_g^\circ$ in at most finitely many points.
\end{theorem}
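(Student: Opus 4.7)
The plan is to argue by contradiction: assume $Z \cap \Tcal_g^\circ$ is infinite. Then the Zariski closure of this intersection in $Z$ contains an irreducible component $W$ of dimension $\geq 1$. After desingularization, passing to a finite \'etale cover, and (if $\dim W > 1$) intersecting with a generic complete-intersection curve, I may fix a smooth projective curve $B$ with a reduced divisor $\Delta \subset B$ and a non-isotrivial semistable family $f\colon S \to B$ of smooth projective curves of genus $g$ whose relative Jacobian induces a moduli map $B \to \Acalg$ factoring through $Z$. The goal is to derive a numerical contradiction between the Higgs-bundle constraints imposed by the $\SU(p_i, q_i)$-structure and the slope inequalities for families of curves.

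\textbf{Step 1 (Isotypical Higgs decomposition).} Let $(E, \theta) = (E^{1,0} \oplus E^{0,1}, \theta)$ denote the logarithmic Higgs bundle on $B$ with $E^{1,0} = f_*\omega_{S/B}$ and $\theta \colon E^{1,0} \to E^{0,1} \otimes \Omega_B^1(\log \Delta)$ the Kodaira--Spencer map. Since the period map factors through $Z$ with weak Mumford--Tate group $\Gbf$, after a further finite \'etale cover of $B$ the Higgs bundle decomposes into pieces indexed by the simple factors of $\Gbf \otimes \Rbb \isom \prod_i \SU(p_i, q_i)$. On each piece $(E_i, \theta_i)$ one has $\rank E_i^{1,0} = p_i$, $\rank E_i^{0,1} = q_i$, and the generic rank of $\theta_i$ is at most $q_i$, since the holomorphic tangent space of the symmetric domain attached to $\SU(p_i, q_i)$ is isomorphic to $\mathrm{Hom}(V^{1,0}, V^{0,1})$ and every such homomorphism has rank $\leq \min(p_i, q_i) = q_i$.

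\textbf{Step 2 (Slope comparison).} Decompose $E^{1,0} = A \oplus F$ into the ample (non-flat) and flat parts following Fujita's decomposition, so that $F$ has degree zero with unitary flat structure while $A$ carries the genuine variation. The Arakelov-type inequality yields $\deg A \leq \tfrac{\rank A}{2}\, \deg \Omega_B^1(\log \Delta)$, and Step 1 gives $\rank A \leq \sum_i q_i$. On the other hand, the refined Cornalba--Harris / Xiao slope inequality, as applied in our earlier work on Coleman--Oort, provides a lower bound $\deg f_*\omega_{S/B} \geq c_g \cdot \deg \Omega_B^1(\log \Delta)$ with an effective constant $c_g$ growing essentially like $g/12$. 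Under the normalization $p_i + q_i = n$ and the hypothesis $\max_i q_i/n < 1/12$, the combination forces $c_g \leq \tfrac{\sum_i q_i}{2} < \tfrac{g}{12}$, contradicting the lower bound for $c_g$.

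\textbf{Step 3 (Role of $q_{i_0} \geq 2$ and main obstacle).} The hypothesis $q_{i_0} \geq 2$ rules out the borderline case in which every nontrivial factor has $q_i = 1$ (essentially Shimura-curve factors), where the generic rank of each $\theta_i$ is only $1$ and the Higgs bundle may be isotrivial or only marginally violate the slope inequality; it guarantees a genuinely rank-$\geq 2$ Higgs contribution, providing the slack needed in the slope comparison. The principal technical obstacle lies in Step 1 for \emph{weakly} (rather than strongly) special subvarieties: since a weakly special subvariety arises by Hecke translation of a sub-Shimura variety by a special point, the Hodge decomposition at the generic point of $W$ is twisted accordingly, and one must justify that the isotypical splitting and the rank bound $\rank \theta_i \leq q_i$ persist under this twist, compatibly with the principal polarization. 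Verifying this faithfully is where the bulk of the technical work in extending our earlier machinery for weakly special subvarieties takes place.
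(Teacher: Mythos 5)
Your overall strategy (reduce to a curve in the intersection, complete to a semistable fibration, decompose the Hodge/Higgs bundle according to the unitary factors, and contradict a slope inequality) is the same as the paper's, but both quantitative pillars of your argument fail as stated. First, the rank bookkeeping in Step 1 is wrong: for the weight-one PVHS induced by the symplectic embedding $\Gbf\mono\Sp_{2g}$, each copy of the standard representation of $\SU(p_i,q_i)$ is a real $2n$-dimensional piece whose Hodge components $\Ecal_i^{1,0}$ and $\Ecal_i^{0,1}$ each have rank $n=p_i+q_i$, not $p_i$ and $q_i$. The splitting into ranks $p_i$ and $q_i$ happens \emph{inside} $\Ecal_i^{1,0}$, and the crucial fact the paper extracts from the polarization is that these two summands have \emph{equal} first Chern class $d_i>0$. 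This is what produces a sub-bundle of slope $\geq d_i/q_i$, hence a large maximal Harder--Narasimhan slope when $q_i/n$ is small. Your claim that the ample part $A$ of Fujita's decomposition has $\rank A\leq\sum_i q_i$ is false for the same reason: the rank-$p_i$ summand also has positive degree, so the ample part has rank up to $rmn$, and your Step 2 comparison collapses. Second, the inequality you invoke, $\deg f_*\omega_{S/B}\geq c_g\deg\Omega_B^1(\log\Delta)$ with $c_g$ growing like $g/12$, is not a known (or true) statement; non-isotrivial families can have Hodge bundles of degree far below $\tfrac{g}{12}\deg\Omega_B^1(\log\Delta)$. The inequality actually needed is Xiao's bound $(2g-2)\mu_{\max}\leq\omega_{S/B}^2\leq 12\deg f_*\omega_{S/B}$ on the maximal HN slope $\mu_{\max}$ of $f_*\omega_{S/B}$; combined with the equal-degree splitting above and $g=n_0+rmn$ it yields the contradiction.

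You have also misidentified the role of the hypothesis $q_{i_0}\geq 2$. It is not there to provide ``slack'' by excluding Shimura-curve-like factors; it enters via Satake's classification of symplectic representations, which guarantees that once some $q_{i_0}\geq 2$ the representation $\Gbf\mono\Sp_{2g}$ restricted to $\Rbb$ is forced to be a trivial summand plus $m$ copies of the direct sum of the standard representations $V_1\oplus\cdots\oplus V_r$ (ruling out the exotic symplectic representations available for $\SU(n,1)$). Without this structural input the explicit Hodge-bundle decomposition and the identity $g=n_0+rmn$ underlying the numerics are not available. Finally, your worry in Step 3 about twisting by Hecke translation is not where the work lies: the paper handles weakly special subvarieties by choosing a weak Mumford--Tate group giving an equivariant embedding $X_Z^+\mono\Hcal_g^+$ in Satake's sense, after which the bundle-theoretic analysis is identical to the special case.
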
 Here the notion of weak Mumford-Tate group is defined as an analogue of the derived part of the Mumford-Tate group of a Shimura subvariety, and  the main technique in the proof is a slope inequality for the Hodge bundles associated to surface fibrations.

\autoref{sec-ekedahl-serre} provides evidence
supporting the questions for certain  type of points, i.e., zero-dimensional weakly special subvarieties. We first treat the case of self-products of an elliptic curve:

	

\begin{theorem}\label{thm-ekedahl-serre}
	Let $c$, $d$, and $H$ be given positive constants.
	Then there exists a positive constant $G=G(c,d,H)$, such that $g\leq G$
	if there exists a smooth projective curves $C$ over $\Qac$ of genus $g$ satisfying the following conditions:
	\begin{itemize}
		\item $C$ is defined over a number field $F$ of degree at most $d$;		
		
		\item the Jacobian $\Jac(C)$ of $C$ admits an isogeny of degree at most $c$ to $E^g$,
		where $E$ is an elliptic curve over $F$ with Faltings height
		$h_{\Fal}(E)$ not exceeding $H$ and satisfying the Sato-Tate equidistribution
		 as in Theorems \ref{equid-ncm} and \ref{equid-cm}.
	\end{itemize}	
\end{theorem}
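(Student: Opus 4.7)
The plan is to turn the isogeny $\Jac(C)\sim E^g$ into a pointwise identity of Frobenius traces, and then to invoke the effective Sato--Tate equidistribution for $E$ to produce a prime at which that identity forces an upper bound on $g$. Passing to $\ell$-adic Tate modules, the isogeny of degree at most $c$ from $\Jac(C)$ to $E^g$ induces an isomorphism $V_\ell(\Jac C)\isom V_\ell(E)^{\oplus g}$ of $\Qbb_\ell$-linear Galois representations, defined over a finite extension $F'/F$ whose degree is bounded in terms of $c$ (by the finiteness of isogenies of bounded degree between two fixed abelian varieties, together with a standard Galois descent). Thus for every prime $\pfrak$ of $F'$ at which $C$ and $E$ both have good reduction, $a_\pfrak(\Jac C)=g\cdot a_\pfrak(E)$, and Hasse--Weil combined with the trivial bound $\#C(\Fbb_\pfrak)\geq 0$ gives the key inequality
$$
a_\pfrak(E)\ \leq\ \frac{N(\pfrak)+1}{g}.
$$

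The second step is to supply a prime $\pfrak$ of $F'$, of good reduction for both $E$ and $C$, with norm bounded by some explicit $X=X(c,d,H)$, and at which $a_\pfrak(E)\geq c_0\sqrt{N(\pfrak)}$ for some absolute constant $c_0>0$ (corresponding to restricting the Sato--Tate angle to a fixed subarc, say $\theta_\pfrak\in[0,\pi/3]$). This is exactly what the cited effective Sato--Tate theorems deliver: Theorem~\ref{equid-ncm} in the non-CM case and Theorem~\ref{equid-cm} in the CM case (where one restricts to primes split in the CM field). In both, the hypotheses $[F:\Qbb]\leq d$ and $h_{\Fal}(E)\leq H$ translate into effective control of the conductor of $E$, hence of $X$, and the bounded enlargement $F'/F$ adds only a $c$-dependent factor. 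Combining the two inequalities yields
$$
g\ \leq\ \frac{N(\pfrak)+1}{c_0\sqrt{N(\pfrak)}}\ =\ O\bigl(\sqrt{X(c,d,H)}\bigr),
$$
which is the desired bound $G=G(c,d,H)$.

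The bulk of the substance sits in the effective Sato--Tate input, which the paper invokes as Theorems~\ref{equid-ncm} and~\ref{equid-cm}; everything else is essentially book-keeping. Two ancillary issues require attention. First, the primes of bad reduction of $C$ must be avoided; but since $\Jac(C)$ and $E^g$ are isogenous up to degree $c$, the conductor of $\Jac(C)$ is controlled by that of $E$ together with $c$, so only an $O_{c,d,H}(1)$-bounded set of primes need be discarded, which does not affect the asymptotics. Second, extracting the effective lower bound $|a_\pfrak(E)|\geq c_0\sqrt{N(\pfrak)}$ for a prime of explicitly bounded norm (rather than a weaker polynomial estimate) is exactly why one needs Sato--Tate equidistribution and not merely Chebotarev; this effective equidistribution is the deep input and will be the principal obstacle in a self-contained write-up.
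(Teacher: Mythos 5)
Your first step (the trace identity $a_\pfrak(\Jac C)=g\,a_\pfrak(E)$ and the inequality $a_\pfrak(E)\leq\frac{N(\pfrak)+1}{g}$ from $\#C(\Fbb_\pfrak)\geq 0$) matches the paper's, but the way you try to exploit it has a fatal gap. You need a single prime $\pfrak$ of \emph{good reduction for the curve} $C$ with $N(\pfrak)\leq X(c,d,H)$ and $a_\pfrak(E)\gg\sqrt{N(\pfrak)}$, and you justify the existence of such a prime by claiming that the bad-reduction primes of $C$ form a set of size $O_{c,d,H}(1)$ because the conductor of $\Jac(C)$ is controlled by that of $E$. This confuses reduction of the curve with reduction of its Jacobian. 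Good reduction of $\Jac(C)$ (equivalently, of $E$) at $\pfrak$ does \emph{not} imply good reduction of $C$: the stable fiber can be a nodal curve of compact type (a tree of smooth components) whose Jacobian is still an abelian variety. In fact the very inequality you derived shows that at \emph{every} prime $\pfrak$ in the positive-density set where $a_\pfrak(E)\geq 1$ and $N(\pfrak)+1<g$, the curve $C$ is forced to have bad reduction --- roughly $g/\log g$ primes, all of norm below $g$. So for large $g$ there is simply no good-reduction prime of bounded norm available, and your single-prime strategy cannot be repaired. A second, independent problem is that you need an \emph{effective} Sato--Tate statement (an explicit bound on the norm of the first prime with $a_\pfrak(E)\geq c_0\sqrt{N(\pfrak)}$); Theorems \ref{equid-ncm} and \ref{equid-cm} are only qualitative equidistribution results and do not supply this.

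The paper's proof turns the obstruction you run into on its head: instead of avoiding the forced bad reduction, it quantifies it. Since the Jacobian has good reduction at those primes, the stable fiber of $C$ is of compact type with components of genus at most $q_\pfrak+1$, hence has at least $\tfrac{g}{2q_\pfrak}$ nodes; summing $\tfrac{g}{2q_\pfrak}\log q_\pfrak$ over the density-$\rho$ set of primes with $q_\pfrak+1<g$ gives $\Delta(\Ccal/B)\gtrsim g\log g$ (Lemma \ref{lemma analytic number theory}). The arithmetic Noether formula of Faltings together with Wilms' bound on the $\delta$-invariant gives the upper bound $\Delta(\Ccal/B)\leq 12h_\Fal(C)+O(g)=O(g)$ using $h_\Fal(C)=h_\Fal(\Jac C)=g\,h_\Fal(E)+O(\log c)$. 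Comparing $g\log g\lesssim g$ bounds $g$. Note that only the integrality $a_\pfrak(E)\geq 1$ on a positive-density set is used, so no effective equidistribution is needed. If you want to salvage your write-up, you must replace the single-prime argument by this global count of singular points against an Arakelov-theoretic upper bound.
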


Restricting to the CM case, it is known that the Sato-Tate equidistribution holds
for a finite product of CM elliptic curves (cf. \autoref{johansson} for general CM abelian varieties, not necessarily simple),
and that the Faltings height of a CM elliptic curve is bounded from above by the degree of its definition field (cf. \autoref{thm-bound-falting}).
Thus we prove
\begin{theorem}\label{thm-cm-elliptic}
	Let $c$ and $d$ be given positive constants. Consider the following set of isomorphism classes:
		$$\mathfrak{C}_d^c=\left\{~C~\left|~
		\begin{aligned}
		& C\text{~is a smooth projective curve of genus $g>0$ defined over}\\
		&\text{a number field $F$ with $[F:\mathbb{Q}]\leq d$, and the Jacobian $\Jac(C)$}\\
		&\text{admits an isogeny of degree at most $c$ to $E_1\times\cdots\times E_g$}\\
		&\text{for some CM elliptic curves $E_i$ over $F$.}
		\end{aligned} \right.\right\}\Bigg/_{\hspace{-2mm}\Large\cong}~.$$	
	Then the set $\mathfrak{C}_d^c$ is finite.
	In particular, the genera of curves in $\mathfrak{C}_d^c$ are bounded.
\end{theorem}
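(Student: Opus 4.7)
The plan is a two-stage reduction. First I would use \autoref{thm-ekedahl-serre} to bound the genus $g$ of every $C \in \mathfrak{C}_d^c$; then I would use classical finiteness results for CM elliptic curves, combined with Faltings' isogeny theorem and Torelli, to upgrade the bound on $g$ to full finiteness of $\mathfrak{C}_d^c$.

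For the first stage, fix $C \in \mathfrak{C}_d^c$ defined over a number field $F$ with $[F:\Qbb] \leq d$, and write $\Jac(C) \sim E_1 \times \cdots \times E_g$ with each $E_i$ a CM elliptic curve over $F$ and isogeny degree at most $c$. By \autoref{thm-bound-falting} one has $h_{\Fal}(E_i) \leq H$ for a constant $H = H(d)$ depending only on $d$; and since a product of CM abelian varieties is again CM, \autoref{johansson} ensures that $E_1 \times \cdots \times E_g$ satisfies the Sato-Tate equidistribution predicted in \autoref{equid-cm}. I would then apply \autoref{thm-ekedahl-serre} --- in the form adapted to distinct isogeny factors, which follows from its proof since the arithmetic constraint on Frobenius traces $a_p(C) = \sum_{i=1}^g a_p(E_i)$ uses only the joint Sato-Tate law on the product --- to conclude $g \leq G(c, d, H) =: G'(c, d)$.

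For the second stage, with $g$ bounded I would invoke the classical finiteness of CM $j$-invariants of bounded degree: if $E$ is CM by an order $\calo$ in an imaginary quadratic field $K$, then $j(E)$ generates the ring class field of $\calo$ over $K$, of degree $h(\calo)$, and by Heilbronn's theorem $h(\calo) \to \infty$ as $|\Disc(\calo)| \to \infty$, so only finitely many $j$-invariants $j(E_i)$ can satisfy $[F:\Qbb] \leq d$. Combined with the bound on $g$, this places the product $E_1 \times \cdots \times E_g$ in a finite set of $\Qac$-isomorphism classes. Faltings' isogeny theorem, applied over a common finite extension of $\Qbb$ containing all the $j$-invariants in play, then bounds the number of $\Jac(C)$ admitting a degree-$\leq c$ isogeny to such a product; Torelli's theorem converts this into finiteness of $C$ itself up to isomorphism.

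The hard part will be the first stage: extending the proof of \autoref{thm-ekedahl-serre} from the diagonal case $E^g$ to a genuine product $E_1 \times \cdots \times E_g$ of possibly distinct CM elliptic curves. The analytic input is the joint Sato-Tate distribution of the traces $a_p(E_i)$, governed in the CM case by the Hecke characters of the $E_i$; this is precisely the content of \autoref{johansson}. The second stage is essentially bookkeeping, but care is needed to ensure the finiteness statements (CM $j$-invariants, Faltings' isogeny classes, Torelli) fit together over a common bounded-degree field.
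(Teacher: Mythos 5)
Your proposal is correct in substance, but it reconstructs by hand something the paper already has, and it diverges from the paper in the finiteness step. The paper proves this theorem in two lines: CM elliptic curves and their products satisfy Sato--Tate by \autoref{equid-cm} and \autoref{johansson}, so the statement is the case $R=1$ of \autoref{thm-4.14}, whose proof is exactly your ``stage 1 adapted to distinct factors'': the height bound from \autoref{thm-bound-falting} plus the Arakelov--Szpiro inequality on one side, and on the other a positive-density set of primes $\pfrak$ at which \emph{all} the traces $a_\pfrak(E_i)$ are simultaneously positive integers, forcing $\#\Ccal(k_\pfrak)=1+q_\pfrak-\sum_i a_\pfrak(E_i)\geq 0$ and hence many singular fibers. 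So the part you flag as ``the hard part'' is available off the shelf as \autoref{thm-4.14}; the only genuine care needed there (in your version and the paper's alike) is that the density of the simultaneous-positivity set must not degenerate as $g$ grows, which is controlled here because bounded degree $d$ confines the $E_i$ to finitely many CM isogeny factors. Where you genuinely differ is the passage from ``$g$ bounded'' to finiteness of $\mathfrak{C}_d^c$: the paper applies Northcott's theorem directly, using $h_{\Fal}(C)=h_{\Fal}(\Jac(C))\leq g(H+O(1))$ to place the curves in a set of bounded Faltings height, bounded genus and bounded degree of definition. Your route --- finiteness of CM $j$-invariants of degree at most $d$ via class-number growth, then finiteness of the degree-$\leq c$ isogeny class, then Torelli --- also works and is arguably more elementary: with the degree bound $c$ you do not even need Faltings' deep isogeny finiteness, since $\Jac(C)$ is a quotient of $E_1\times\cdots\times E_g$ by a subgroup of bounded order, and Torelli together with the finiteness of principal polarizations on a fixed abelian variety finishes the argument; what it costs you is precisely the cross-field bookkeeping you acknowledge at the end, which the Northcott argument sidesteps entirely.
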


\begin{theorem}\label{thm-ekedahl-serre-2}
	Let $d$ be any given positive constant, and consider the following set of isomorphism classes
	$$\mathfrak{S}_d=\left\{~C~\left|~
	\begin{aligned}
	& C\text{~is a smooth projective curve of genus $g>0$ defined over}\\
	&\text{a number field $F$ with $[F:\mathbb{Q}]\leq d$, and the Jacobian $\Jac(C)$}\\
	&\text{is isogenous to $E^g$ for some CM elliptic curve $E$ over $F$.}
	\end{aligned} \right.\right\}\Bigg/_{\hspace{-2mm}\Large\cong}~.$$
	Then the set $\mathfrak{S}_d$ is finite.
	In particular, the genera of curves in $\mathfrak{S}_d$ are bounded. 
\end{theorem}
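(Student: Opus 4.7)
The plan is to reduce Theorem \ref{thm-ekedahl-serre-2} to \autoref{thm-cm-elliptic} by showing that the hypothesis $\Jac(C)\sim E^g$ with a \emph{single} CM elliptic curve $E$ forces the existence of an isogeny from $\Jac(C)$ to a product of CM elliptic curves whose degree is bounded uniformly in $g$, after possibly base-changing to a number field of degree bounded only in $d$. The key structural feature that makes this work is that $\End^0(\Jac(C)_{\ol{F}})=M_g(K)$ for the single fixed imaginary quadratic field $K=\End^0(E_{\ol{F}})$, a rigidity not available in the general setting of \autoref{thm-cm-elliptic}.

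First, \autoref{thm-bound-falting} bounds $h_{\Fal}(E)\le H_0(d)$; via the Chowla--Selberg formula this bounds the discriminant $|\Disc(K)|$, and hence also the class number $h(K)$ and the degree of the Hilbert class field $K(j(E))$. Set $F':=F\cdot K(j(E))$, so $[F':\Qbb]\le d'(d)$ independently of $g$, and the full order $\mathcal{O}_K$ acts by $F'$-rational endomorphisms on $E$. Second, apply Steinitz's theorem to the torsion-free $\mathcal{O}_K$-module $H_1(\Jac(C)_{\ol{F}},\Zbb)$ of rank $g$: it is isomorphic to $\mathcal{O}_K^{g-1}\oplus I$ for some fractional ideal $I$ whose class lies in the finite group $\mathrm{Cl}(\mathcal{O}_K)$. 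Translating this decomposition back to the abelian-variety side produces an isogeny
\[
\Jac(C)_{\ol{F}'}\longrightarrow E_1\times\cdots\times E_g,
\]
of degree bounded by some constant $c_0(d)$ arising from a Minkowski-type representative of $[I]$, where each $E_i$ is a CM elliptic curve isogenous to $E$. Third, Galois descent---combined with Faltings's isogeny theorem and the finiteness of $F'$-isogeny classes of CM elliptic curves of bounded Faltings height---descends the isogeny to $F'$ at the cost of a further bounded-degree field extension. At this point the hypotheses of \autoref{thm-cm-elliptic} are satisfied with parameters $(c_0(d),d'(d))$, so finiteness of $\mathfrak{C}^{c_0(d)}_{d'(d)}$ forces that of $\mathfrak{S}_d$.

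The hard part is the descent: Steinitz's theorem gives only a $\ol{F}'$-isomorphism of underlying abelian varieties, and upgrading this to an $F'$-rational isogeny of bounded degree requires a careful Galois-equivariance analysis. One expects to use complex multiplication theory to confine each factor $E_i$ to the (bounded) Hilbert class field of $K$, and Faltings's isogeny theorem to control the Galois orbit of the isogeny itself, so that the total cost of descent---both in field degree and isogeny degree---depends only on $|\Disc(K)|$, hence only on $d$. The payoff is that, with this uniform-in-$g$ control in hand, one never needs to invoke \autoref{thm-ekedahl-serre} with parameters depending on $g$, which is what enables the passage from finiteness of genus-$g$ slices to finiteness of $\mathfrak{S}_d$ as a whole.
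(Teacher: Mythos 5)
Your overall strategy coincides with the paper's: the only thing separating $\mathfrak{S}_d$ from $\mathfrak{C}_d^c$ is a bound, uniform in $g$, on the degree of an isogeny from $\Jac(C)$ to a product of CM elliptic curves, so the entire proof consists of producing such a bound. The paper gets this for free by citing Kani's theorem \cite[Theorem\,2]{kani}: an abelian variety over $F$ isogenous to $E^g$ with $E$ a CM elliptic curve is already \emph{isomorphic over $F$} to a product $E_1\times\cdots\times E_g$ of CM elliptic curves, so one may take $c=1$, apply \autoref{thm-4.14}, and conclude finiteness by bounding $h_\Fal(C)=\sum_i h_\Fal(E_i)$ via \autoref{thm-bound-falting} and invoking Northcott. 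Your Steinitz-theorem decomposition is precisely the classical mechanism underlying Kani's result, so the two routes agree in spirit; but your re-derivation, as written, has two genuine gaps.

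First, the $\mathcal{O}_K$-module structure on $H_1(\Jac(C)_{\ol F},\Zbb)$ is not automatic. The hypothesis supplies an isogeny $\Jac(C)\to E^g$ of \emph{unbounded} degree, and transporting the diagonal $\mathcal{O}_K$-action on $E^g$ back to $\Jac(C)$ need not preserve the lattice $H_1(\Jac(C),\Zbb)$: only the order $\End(\Jac(C))\cap K$ is guaranteed to act, and its conductor is not a priori bounded in terms of $d$ alone. Steinitz's theorem in the clean form $\mathcal{O}_K^{g-1}\oplus I$ requires a module over the Dedekind domain $\mathcal{O}_K$ (or an order of controlled conductor), so this step needs a real argument --- and that argument is essentially the content of Kani's theorem. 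Second, you explicitly defer the descent from $\ol{F}'$ to $F'$ as ``the hard part,'' offering only an expectation that CM theory and Faltings's isogeny theorem will control both the field degree and the isogeny degree. Since the uniform-in-$g$ control of exactly these quantities is the whole point of the reduction, an incomplete descent is a hole in the proof rather than a routine verification. Both difficulties disappear if you invoke Kani's Theorem 2 directly, which hands you the $F$-rational isomorphism in one stroke; the rest of your argument then matches the paper's.
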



Similar arguments allow us to treat the case of products of CM abelian varieties whose dimensions and number fields of definition are both bounded \`a priori, see \autoref{thm-4.14}.
In other words, we exclude from $\Tcal_g^\circ$ certain types of algebraic points over number fields,
as long as the abelian varieties represented by these points satisfy the Sato-Tate equidistribution.
The proof is much motivated by a similar result in positive characteristic by \cite{kuk-10}
based on a careful study of singular points in surface fibration.


\begin{remark}
It should be pointed out that the general situation of Ekedahl-Serre question is still open. The recent works of Paulhus etc. have updated the list of values of $g$ for which there exists totally decomposable Jacobians of genus $g$, cf. \cite{paulhus acta, paulhus open book, paulhus preprint}. 
\end{remark}

\section{Questions of Ekedahl-Serre and Coleman-Oort types}\label{sec-questions}

We refer to \cite{clz-compositio} for our convention on  (connected) Shimura subvarieties in $\Acal_g$, which are associated to (connected) Shimura subdata and are also called special subvarieties in the literature. 

Aside from Shimura subvarieties there is the notion of weakly special subvarieties, the definition of which is briefly recalled as follows. Let $M=\Gamma\bsh X$ be the Shimura variety defined by $(\Gbf,X;X^+)$ and $\Gamma$, and $M'\subset M$  the Shimura subvariety defined by some Shimura subdatum $(\Gbf',X';X'^+)$. There exists a morphism of connected Shimura varieties $p:M'\ra M''$ where $M''$ is the Shimura variety associated $(\Gbf'^\ad,X'^\ad;X'^+)$ using  the adjoint group $\Gbf'^\ad$ of $\Gbf'$, the same connected Hermitian symmetric domain $X'^+$, the orbit $X'^\ad=\Gbf'^\ad(\Rbb)X'^+$, and $p$ is induced by the projection $\Gbf'\ra\Gbf'^\ad$. Once $(\Gbf'^\ad,X'^\ad;X'^+)$ admits a decomposition into a product of Shimura data $(\Gbf_1,X_1;X_1^+)\times(\Gbf_2,X_2;X_2^+)$, one may find a finite morphism of Shimura varieties $M''\ra M_1\times M_2$ with $M_i$ associated to $(\Gbf_i,X_i;X_i^+)$, and an irreducible component of the preimage of $\xbar_1\times M_2$ in $M'$,  for some complex point $\xbar_1\in M_1$, is a weakly special subvariety of $M$, and every weakly special subvariety in $M$ is obtained in this way. In this paper we do allow $M_2$ to be trivial, and in this way the notion of weakly special subvarieties of dimension zero is not very interesting: it is simply the whole collection of complex points in $M$.

It is proved in \cite{moonen linear} that weakly special subvarieties in Shimura varieties are the same as totally geodesic subvarieties, and that a weakly special subvariety is special, i.e. is a Shimura subvariety if and only if it contains a special point, namely zero-dimensional Shimura subvariety. 

Similar to the notion of Mumford-Tate groups for Shimura subdata and Shimura subvarieties, we can talk about the weak Mumford-Tate group associated to a weakly special subvariety:

\begin{definition}[weak Mumford-Tate group]\label{derived mumford-tate group} Let $M=\Gamma\bsh X^+$ be a Shimura variety defined by $(\Gbf,X;X^+)$, and let $Z$ be a weakly special subvariety in $M$ defined by the procedure above: one starts with a Shimura subvariety $M'=\wp_\Gamma(X'^+)$ given by $(\Gbf',X';X'^+)$, and $p:M'\ra M''$ the morphism of Shimura varieties given by the adjoint map $$(\Gbf',X';X'^+)\ra(\Gbf'^\ad,X'^\ad;X'^+)\isom(\Gbf_1,X_1;X_1^+)\times(\Gbf_2,X_2;X_2^+),$$
	so that $Z$ is an irreducible component in $p^\inv(\xbar_1\times M_2)$. In this case we define the weak Mumford-Tate group $\MT^\wk(Z)$ to be the derived part of the neutral component of $p^\inv(\Gbf_2)$. From the factorization $p:\Gbf'\ra\Gbf'^\ad\isom\Gbf_1\times\Gbf_2$, it is easy to verify that $\MT^\der(Z)$ is a connected semi-simple $\Qbb$-subgroup of $\Gbf$, with adjoint group $\Gbf_2$.
	\end{definition}
The construction of $\MT^\wk(Z)$ is not canonical.  For example, one may consider a second Shimura subdatum $(\gamma\Gbf'\gamma^\inv,\gamma X;\gamma X^+)$ by the conjugation of some $\gamma\in\Gamma$, which leaves $M'$ and $Z$ unchanged but modifies $\MT^\wk(Z)$. This notion is only introduced as a choice of semi-simple $\Qbb$-subgroup in $\Sp_{2g}$ that gives rise to an equivariant embedding $X_Z^+\mono \Hcal-g^+$ in the sense of \cite{satake rational}, with $X_Z^+$ the Hermitian symmetric  uniformizing $Z$.  In particular,  $Z$ is also of the form $\Gamma_Z\bsh X_Z^+$, where $\Gamma_Z$ is a congruence subgroup in $\MT^\der(Z)$. These observations are useful later for the study of Hodge vector bundles on $Z$ in Section 3.

Note that for $Z=M'$ a Shimura subvariety defined by $(\Gbf',X';X'^+)$ one may simply choose $\MT^\wk(Z)=\Gbf'^\der$. However for a weakly special subvariety, like a point $z$ of zero dimension, its weak Mumford-Tate group is necessarily a point,  namely the trivial group, while the derived part of its Mumford-Tate group might be non-trivial, related to the minimal Shimura subvariety containing it.

\begin{definition}

Fix $M=\Gamma\bsh X^+$ a Shimura variety defined by $(\Gbf,X;X^+)$ and $M'$ a Shimura subvariety defined by $(\Gbf',X';X'^+)$. For $q\in\Gbf^\der(\Qbb)$ we define the Hecke translate $T_q(M')$ of $M'$ by $q$ to be the Shimura subvariety defined by $(q\Gbf'q^\inv,qX';qX'^+)$. Namely we realize $M'$ as $\wp_\Gamma(X'^+)$ using the uniformization map $\wp_\Gamma:X^+\ra M$, and we define $T_q(M')=\wp_\Gamma(qX'^+)$. The union $$H_M(M'):=\bigcup_{q\in\Gbf^\der(\Qbb)^+}T_q(M')$$ is called the total Hecke orbit of $M'$ in $M$. 

The same procedure applies to weakly special subvarieties, because it is easy to see that Hecke translation respects weakly special subvarieties. Moreover, if $Z$ is weakly special with associated weak Mumford-Tate group $\Hbf$, then $T_q(Z)$ is weakly special with associated derived Mumford-Tate group $q\Hbf q^\inv$. 

In the sequel we only need the  total Hecke orbit for $M=\Acal_g$ using Hecke translates indexed by $q\in\Sp_{2g}(\Qbb)$, and we simply write $H(M')$ omitting the subscript $\Acal_g$.
\end{definition}


The following two questions are inspired by the question of Ekedahl and Serre and the conjecture of Coleman and Oort:
\begin{question}\label{question-clz}
	Fix a weakly special $M$ in $\Acal_g$.
	
	\begin{itemize}
		\item[(ES)]
		Under what condition for $M$ and $g$ could one find $\Tcal_g^\circ \bigcap H(M)$ being finite or even empty?
		
		\item[(CO)] What kind of weakly special $M\subset\Acal_g$ could produce a finite intersection $\Tcal_g^\circ\bigcap M$? When could the intersection  $\Tcal_g^\circ\bigcap M$ be even empty?
	\end{itemize}
\end{question}

Question (CO) is of different flavor from (ES), as only one fixed weakly special variety is considered instead of the total Hecke orbit, and (CO) slightly strengthens the original Coleman-Oort conjecture which only predicts finiteness of CM points in $\Tcal_g^\circ$ for $g$ large.

As we have mentioned in \autoref{sec-introduction},
it is in general inconvenient to deal with the total Hecke orbits directly. Motivated by our progress in (CO),
which will be reviewed later in \autoref{sec-unitary},
we propose the following conjecture which should serve as a bridge from (ES) to (CO):

\begin{conjecture}\label{bridge}
	For $g$ large enough and $M$ a weakly special subvariety in $\Acal_g$, the intersection $\Tcal_g^\circ\bigcap H(M)$ can be reduced to only finitely many Hecke translates of $M$, namely there exists $q_1,\cdots,q_N\in\Sp_{2g}(\Qbb)$ such that $$\Tcal_g^\circ\bigcap H(M)=\Tcal_g^\circ\bigcap\Big(T_{q_1}(M)\cup\cdots\cup T_{q_N}(M)\Big).$$
\end{conjecture}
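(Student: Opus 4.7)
The plan is to situate this conjecture within the Zilber-Pink framework for $\Acal_g$ and approach it through the Pila-Zannier strategy. Indeed, each Hecke translate $T_q(M)$ is itself a weakly special subvariety of the same dimension as $M$, and when $\dim M + \dim \Tcal_g^\circ < \dim \Acal_g$ (which holds for $g$ large since $\dim \Tcal_g^\circ = 3g-3$ grows linearly while $\dim \Acal_g = g(g+1)/2$ grows quadratically), any non-empty component of $T_q(M) \cap \Tcal_g^\circ$ is an atypical intersection in the sense of Zilber-Pink. The conjecture thus asserts that only finitely many Hecke translates of $M$ can exhibit such atypical intersections with $\Tcal_g^\circ$, and all other translates must be disjoint from the open Torelli locus.

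The first step is to replace the parameter space $\Sp_{2g}(\Qbb)$ by a discrete, arithmetically controlled one. The translates are parametrized by the double cosets $\Gamma \backslash \Sp_{2g}(\Qbb)^+ / \mathrm{Stab}(M)$, and each double coset carries a natural complexity invariant, e.g.~the common denominator and size of the entries of a representative $q$. The goal is then to establish a bound on this complexity depending only on $g$ and $M$, beyond which $T_q(M)$ cannot meet $\Tcal_g^\circ$; the conjecture would follow since only finitely many double cosets have bounded complexity.

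To produce such a bound, I would combine two complementary tools. On the o-minimal side, one could try to run the Pila-Zannier strategy by lifting everything to the Siegel upper half-space, invoking the hyperbolic Ax-Schanuel theorem of Mok-Pila-Tsimerman to show that every positive-dimensional component of $T_q(M) \cap \Tcal_g^\circ$ is governed by a weakly special subvariety contained in $\Tcal_g^\circ$, and then using Pila-Wilkie counting to convert a hypothetical infinite family of intersecting translates into an algebraic relation forcing them to collapse into finitely many. On the arithmetic side, when the intersection points are algebraic, one could apply Masser-W\"ustholz type isogeny bounds: an isogeny realizing $\Jac(C)$ as a point of $T_q(M)$ has degree controlled by the Faltings height of $\Jac(C)$, and the latter is in turn controlled by the geometry of $C$ via slope inequalities of the type exploited in \autoref{sec-unitary}.

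The main obstacle is that the statement is essentially a full Zilber-Pink statement localized at the Torelli locus, subsuming both Andr\'e-Oort (when $M$ is a point) and strengthened forms of the Coleman-Oort conjecture. The Pila-Zannier machine requires Galois-orbit lower bounds for the intersection points, and for non-CM points in $\Tcal_g^\circ$ such bounds are currently out of reach in the required uniformity. Consequently, the most realistic near-term progress seems to be establishing the conjecture in cases where either strong monodromy or motivic rigidity produces controllable Galois orbits, for instance for the $\SU(p_i,q_i)$ families treated in the first theorem above, where the slope-theoretic methods of \autoref{sec-unitary} might be adapted to bound the complexity of each $T_q(M)$ meeting $\Tcal_g^\circ$ rather than working with a single fixed weakly special subvariety.
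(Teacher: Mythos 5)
The statement you are trying to prove is \autoref{bridge}, which the paper states as a \emph{conjecture} and does not prove; the authors explicitly remark that ``it is not yet clear how to draw useful consequences from the Zilber-Pink conjecture towards the conjecture bridging (ES) and (CO).'' Your proposal is therefore a research programme rather than a proof, and you acknowledge as much in your final paragraph, so there is nothing to certify here. Beyond that, two concrete points in your outline do not hold up. First, your claim that every nonempty component of $T_q(M)\cap\Tcal_g^\circ$ is automatically atypical ``for $g$ large'' is incorrect as stated: atypicality requires $\dim M+\dim\Tcal_g<\dim\Acal_g$, i.e.\ $M$ of codimension at least $3g-2$ in $\Acal_g$, and the conjecture is asserted for \emph{all} weakly special $M$, including those of small codimension (for which the intersection with the $(3g-3)$-dimensional Torelli locus is typical and Zilber-Pink says nothing). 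The paper makes exactly this codimension restriction when it discusses the connection to Pink's conjecture.

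Second, even in the atypical range and even granting Pink's conjecture in full, the conclusion you would obtain is that $\Tcal_g\cap H(M)$ is contained in a finite union of proper special subvarieties $M_1,\cdots,M_n$ of $\Acal_g$ --- not that these can be taken to be Hecke translates of $M$ itself, which is what \autoref{bridge} demands. This is precisely the gap the authors identify, and your proposal does not address it: the Pila-Wilkie/Ax-Schanuel machinery controls the weakly special subvarieties \emph{containing} the atypical components, but gives no mechanism for forcing them back into the orbit $H(M)$. Your suggested arithmetic input (Masser-W\"ustholz isogeny bounds, Galois orbit lower bounds for non-CM points of $\Tcal_g^\circ$) is, as you concede, unavailable at the required uniformity, so the proposal leaves the conjecture open, which is consistent with its status in the paper.
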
 

Note that Questions (ES) and (CO) and the conjecture above  make sense for zero-dimensional weakly special subvarieties, namely points. If the point in question is a CM point, then so it is with any of its Hecke translates, and (ES) in this case is a consequence of the original Coleman-Oort conjecture which predicts the finiteness of CM points in $\Tcal_g^\circ$ for $g$ large. Tsimerman has shown in \cite{tsimerman torelli} that (ES) holds for certain points which actually lead to empty intersection. Chai and Oort went further in \cite{chai oort jacobian} showing that similar empty intersection also occurs for the Hecke orbit of certain CM points with Hodge generic subvarieties in $\Acal_g$, i.e. closed irreducible subvarieties not contained in any proper Shimura subvarieties. We will provide some zero-dimensional examples supporting (CO) in Section 4.


The recent progress on the Zilber-Pink conjecture, which is a far-reaching generalization of the Andr\'e-Oort conjecture, is much motivated by the rich arithmetic and geometry of atypical intersection in Shimura varieties: \begin{conjecture}[Pink's conjecture on atypical intersection]\label{zilber pink} For $M$ a Shimura variety, write $M^{[d]}$ for the union of Shimura subvarieties of codimension at least $d$. Then for any Hodge generic closed irreducible subvariety $V\subsetneq M$, the intersection $V\cap M^{[1+\dim V]}$ is contained a finite union $M_1\cup\cdots\cup M_n$ for some Shimura subvarieties $M_1,\cdots,M_n\subsetneq M$.
\end{conjecture}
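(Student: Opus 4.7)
The statement is Pink's conjecture on atypical intersection, which simultaneously generalizes the Andr\'e-Oort, Manin-Mumford and Mordell-Lang conjectures; a full unconditional proof is out of reach with current technology, so my plan only sketches the Pila-Zannier strategy that has delivered the known partial cases (Habegger-Pila for products of modular curves, Daw-Ren for certain atypical intersections in $\Acal_g$).

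The plan is to pass to the uniformization $\pi\colon X^+\to M$ of the Shimura variety $M$ by its Hermitian symmetric domain $X^+$, and fix a Siegel fundamental set $\mathfrak{F}\subset X^+$. A special subvariety $M'\subsetneq M$ of codimension at least $1+\dim V$ lifts to a pre-special subvariety $X'^+\subset X^+$, and by the work of Klingler-Ullmo-Yafaev, refined by Bakker-Klingler-Tsimerman, the parameter space of such pre-special subvarieties, restricted to $\mathfrak{F}$, is definable in the o-minimal structure $\Rbb_{\mathrm{an},\exp}$. The atypical intersection locus thus sits inside a single definable family, and this is the step that turns an a priori infinite union of Hecke-type varieties into a tractable geometric object.

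Next I would apply the Pila-Wilkie counting theorem to the definable set $\pi^{-1}(V)\cap\mathfrak{F}$ together with the definable family of pre-special subvarieties. This yields a polynomial upper bound on the number of algebraic points of bounded height and degree in atypical intersection components not coming from an honest analytic block. Independently, for a point $z\in V\cap M^{[1+\dim V]}$ lying on a special subvariety $M'$ of ``complexity'' $\Delta(M')$ (measured by discriminant, conductor and level), one needs a polynomial \emph{lower bound} of the form $\#\Gal(\Qac/F)\cdot z\gg\Delta(M')^{\delta}$ for some fixed $\delta>0$. Comparing these two bounds and combining them with the hyperbolic Ax-Schanuel theorem for period maps (after Mok-Pila-Tsimerman and Bakker-Tsimerman) forces every atypical component of $V\cap M^{[1+\dim V]}$ to be contained in a proper Shimura subvariety of $M$; a noetherian argument then collects them into the required finite list $M_1,\ldots,M_n$.

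The hard part is exactly the Galois orbit lower bound for special subvarieties of arbitrary complexity. For CM points, which are zero-dimensional special subvarieties, this is now available in $\Acal_g$ through Tsimerman's averaged Colmez formula and the resulting full proof of Andr\'e-Oort; but for positive-dimensional special subvarieties no unconditional polynomial lower bound of the required strength is known, and this is the main obstruction to an unconditional proof of Pink's conjecture. My proposal is therefore best regarded as a conditional reduction: granting the expected polynomial Galois orbit bound for special subvarieties, the Pila-Zannier machinery outputs the finite cover by proper Shimura subvarieties $M_1,\ldots,M_n$ predicted by the conjecture, which is also the conceptual model behind the reduction in \autoref{bridge} above.
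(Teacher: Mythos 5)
This statement is an open conjecture (Pink's conjecture on atypical intersection); the paper offers no proof of it and only remarks that it should follow from the hyperbolic Ax--Schanuel conjecture together with a large Galois orbits hypothesis, citing Daw--Ren. Your sketch of the Pila--Zannier strategy, and your identification of the polynomial Galois orbit lower bound for positive-dimensional special subvarieties as the genuine obstruction, is an accurate and honest account of exactly that conditional reduction, so it is consistent with the paper's own treatment --- just be clear that what you have written is a conditional reduction and not a proof, which the paper itself does not claim to possess either.
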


This conjecture of atypical intersection is a consequence of the more general conjecture of Zilber and Pink, which  should follow from the hyperbolic Ax-Schanuel conjecture and the conjecture of large Galois orbits, see \cite{daw ren} for more details. Note that a proof of the hyperbolic Ax-Schanuel conjecture is recently announced by Mok, Pila, and Tsimerman.

However it is not yet clear how to draw useful consequences from the Zilber-Pink conjecture towards the conjecture bridging (ES) and (CO). Consider $V=\Tcal_g$ which is Hodge generic in $\Acal_g$, then for $M\subset \Acal_g$ a Shimura subvariety of codimension at least $3g-2$ the total Hecke orbit $H(M)$ is contained in $\Acal_g^{[3g-2]}$, and one should have $\Tcal_g^\circ\cap H(M)\subset V\cap\Acal_g^{[3g-2]}$ be contained in finitely many proper Shimura subvarieties $M_1,\cdots,M_n$ inside $\Acal_g$, but it is not evident that one could choose these Shimura subvarieties to be contained in Hecke translates of $M$.

\section{Evidence in positive dimensions}\label{sec-unitary}

Our previous work \cite{clz-asian} has shown that Question (CO) is true for Shimura varieties whose Mumford-Tate groups contain ``large'' compact factors, which we review in the weakly special setting as follows for reader's convenience:

\begin{proposition}[{\cite[2.4, 2.5]{clz-asian}}]\label{weakly special compact factors} Let $Z\subset\Acal_g$ be a weakly special subvariety of dimension $>0$ with weak Mumford-Tate group $\Gbf=\MT^\der(Z)$. Assume  that $\Gbf=\Res_{F/\Qbb}\Hbf$ for some semi-simple $F$-group with $F$ a totally real number field of degree $d$ such that $\Hbf$ gives rise to a compact Lie group along $r$ real embeddings $F\mono\Rbb$, and gives non-compact Lie groups along the other $d-r$ real embeddings. Then the open Torelli locus $\Tcal_g^\circ$ only meets $M$ in at most finitely many $\Qac$-points when $$\frac{r}{d}>\frac{5}{6}+\frac{1}{6g}$$
	
\end{proposition}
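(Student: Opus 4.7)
The plan is to argue by contradiction via the slope technique the authors developed in \cite{clz-compositio}: infinitely many points in $\Tcal_g^\circ\cap Z$ would produce a non-isotrivial family of curves whose Hodge bundle decomposes according to the weak Mumford--Tate structure, and the resulting Arakelov upper bound collides with the slope inequality for families in the Torelli locus. First I would reduce to a one-parameter family: assume $\Tcal_g^\circ\cap Z$ is infinite, so its Zariski closure in $Z$ contains an irreducible curve; normalizing, compactifying, and performing semi-stable reduction after a finite base change produces a smooth projective curve $\Bbar$, a non-isotrivial family $\fbar:\Sbar\to\Bbar$ of stable genus-$g$ curves whose smooth fibers lie in $\Tcal_g^\circ$, and a non-constant morphism $\Bbar\to\ol Z\subset\overline{\Acal_g}$. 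Let $\Delta\subset\Bbar$ denote the discriminant locus and set $E:=\fbar_*\omega_{\Sbar/\Bbar}$, the Hodge bundle of rank $g$.

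The next step is to exploit the weakly special structure to decompose $E$. The pulled-back VHS on $\Bbar\setminus\Delta$ is $\Gbf$-equivariant, and restriction of scalars from $\Gbf=\Res_{F/\Qbb}\Hbf$ splits the underlying real VHS into $d$ Galois-conjugate direct summands indexed by the real embeddings $\sigma:F\mono\Rbb$, each of rank $2g/d$. Passing to Hodge filtrations yields
\[
E\otimes_{\Qbb}\Rbb \;=\; \bigoplus_{\sigma}E_{\sigma}\;=\;E^{\mathrm{cpt}}\oplus E^{\mathrm{nc}},
\]
where $E^{\mathrm{cpt}}$ collects the contributions from the $r$ compact embeddings. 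For a compact $\sigma$, the Hermitian symmetric factor attached to $\Hbf_\sigma(\Rbb)$ is a point, so the Higgs field on $E^{\mathrm{cpt}}$ vanishes; Simpson's correspondence (a polarized VHS with vanishing Higgs field is unitary flat) then gives $\deg E^{\mathrm{cpt}}=0$, hence $\deg E=\deg E^{\mathrm{nc}}$, and uniformity of ranks among Galois conjugates forces $\rank E^{\mathrm{nc}}=(d-r)g/d$.

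Finally I would combine two quantitative inputs. The Faltings--Arakelov inequality applied to the Deligne extension of $E^{\mathrm{nc}}$, onto which the entire Higgs content of $E$ is concentrated, gives
\[
\deg E\;\leq\;\tfrac{1}{2}\rank(E^{\mathrm{nc}})\,\deg\omega_{\Bbar}(\log\Delta)\;=\;\tfrac{(d-r)g}{2d}\,\deg\omega_{\Bbar}(\log\Delta).
\]
On the other hand, $\fbar$ is non-isotrivial with generic fiber in the Torelli locus, so the refined slope inequality established in \cite{clz-compositio}---obtained by pairing Xiao's inequality $\omega_{\Sbar/\Bbar}^2\geq\tfrac{4(g-1)}{g}\deg E$ with Noether's formula and a careful accounting of singular fibers---yields the lower bound
\[
\deg E\;\geq\;\tfrac{g-1}{12}\,\deg\omega_{\Bbar}(\log\Delta).
\]
Comparing the two gives $\tfrac{g-1}{12}\leq\tfrac{(d-r)g}{2d}$, equivalently $r/d\leq\tfrac{5}{6}+\tfrac{1}{6g}$, contradicting the hypothesis.

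The principal obstacle is the sharp slope lower bound in the last step. Xiao's inequality combined with Noether's formula alone only controls $\deg E$ in terms of the total node number $\delta$, whereas one needs a bound in terms of $\deg\omega_{\Bbar}(\log\Delta)$ with the precise constant $\tfrac{g-1}{12}$. Extracting this requires a finer analysis of how the nodes of the singular fibers are distributed along $\Delta$; this is the technical heart of \cite{clz-compositio} and is where the genus dependence of the threshold $\tfrac{5}{6}+\tfrac{1}{6g}$ enters. Once that inequality is in hand, the remaining ingredients---reduction to a fibration, Galois decomposition, Simpson's theorem, and Faltings--Arakelov---are comparatively formal.
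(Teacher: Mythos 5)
Your reduction to a one-parameter semi-stable family and the identification of the compact embeddings' contribution as a flat, degree-zero summand of the Hodge bundle both match the paper's argument. The divergence --- and the gap --- is in the final quantitative step. The paper does not use any lower bound on $\deg \fbar_*\omega_{\Sbar/\Bbar}$ at all: it invokes \cite[Theorem 1.2.2]{clz-compositio}, which bounds the \emph{rank} of the flat part, $\rank\Fcal_0\leq(\tfrac{5}{6}+\tfrac{1}{6g})g$, and contradicts this directly with $\rank\Fcal_0\geq\tfrac{r}{d}g$ coming from the $r$ compact factors. Your route instead hinges on the inequality
\[
\deg E\;\geq\;\tfrac{g-1}{12}\,\deg\omega_{\Bbar}(\log\Delta),
\]
which you attribute to \cite{clz-compositio} but which is not what that reference proves, and which is a strictly stronger statement than the rank bound: via the Arakelov inequality it \emph{implies} $\rank E^{\mathrm{nc}}\geq\tfrac{g-1}{6}$, but the converse implication fails, since the rank bound says nothing about degrees. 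Nor does your inequality follow from Xiao's slope inequality plus Noether's formula, as you suggest: those two only yield $\deg E\geq\tfrac{g}{8g+4}\,\delta$ with $\delta$ the total number of nodes, and for a family with few (or no) singular fibers over a base of large genus this gives no control of $\deg E$ in terms of $\deg\omega_{\Bbar}(\log\Delta)$. Indeed your inequality asserts that \emph{every} non-isotrivial semi-stable family of genus-$g$ curves has ``modular slope'' at least $\tfrac{g-1}{12}$, which would fail for any family whose moving part is far from maximal Higgs; it is precisely the kind of statement one cannot assume here.

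So the proof is incomplete at its crucial step: you have deferred the entire content of the theorem to an inequality that is unestablished and stronger than necessary. The fix is to argue on ranks rather than degrees: the flat part $\Fcal_0$ of $\fbar_*\omega_{\Sbar/\Bbar}$ contains the pullback of the subbundle cut out by the $r$ compact embeddings, so $\rank\Fcal_0\geq\tfrac{r}{d}g$, and the rank bound $\rank\Fcal_0\leq\tfrac{5g+1}{6}$ from \cite{clz-compositio} (proved there by Xiao's method applied to the Harder--Narasimhan filtration and the multiplication map, not by an Arakelov-type degree comparison) immediately gives the contradiction with $\tfrac{r}{d}>\tfrac{5}{6}+\tfrac{1}{6g}$. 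No Arakelov inequality and no degree computation are needed.
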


The main idea behind this criterion is a slope inequality of Xiao which allows us to exclude weakly special subvarieties with sufficiently many compact factors in the derived Mumford-Tate groups, using numerical properties of the semi-stable surface fibration associated to a curve in $\Tcal_g^\circ$. In particular the property of possessing ``large'' compact factors is invariant under Hecke translation for a given weak Mumford-Tate group, which makes (ES) more hopeful via the reduction to (CO) through \autoref{bridge}.
\begin{proof}[Sketch of proof]
	
 If the intersection were of strictly positive dimension, it would contain a curve $C$, which lifts to a curve $B$ inside $\Mcal_g$ and the construction in \cite{clz-compositio} and \cite{clz-asian} completes it into a semi-stable surface fibration $\fbar:\Sbar\ra\Bbar$, whose Hodge bundle $\omega=\fbar_*\omega_{\Sbar/\Bbar}$, a vector bundle on $\Bbar$ of rank $g$, is determined by the (1,0)-part in the Hodge decomposition for $C$ using the universal family of abelian varieties over $C$ given by the modular interpretation $C\subset Z\subset\Acal_g$.
	
	The Hodge bundle $\omega$ on $\Bbar$ admits a decomposition into $\Fcal_0\oplus\Fcal_1$, with $\Fcal_0$ the flat part of degree zero, and the slope inequality of Xiao (cf. \cite[Theorem\,1.2.2]{clz-compositio}) for $\fbar:\Sbar\ra\Bbar$ implies that $$\frac{\rank\Fcal_0}{g}\leq\frac{5}{6}+\frac{1}{6g}.$$  The direct sum $\Fcal_0\oplus\Fcal_1$ is induced from a similar decomposition $\Ecal_0\oplus\Ecal_1$ on $C$ by subbundles of the same ranks respectively, using the (1,0)-part of the Hodge decomposition for the universal family of abelian varieties by the modular interpretation of $C\mono\Acal_g$. The refinement $C\subset Z\subset\Acal_g$ implies that $\frac{\rank\Fcal_0}{g}\geq\frac{\rank\Ecal_0}{g}\geq\frac{r}{d}$, and a contradiction arises when \[\frac{r}{d}>\frac{5}{6}+\frac{1}{6g}.\qedhere\]
\end{proof}

The proposition above only requires information from the ``portion'' of compact factors. The paper \cite{clz-asian} has passed on to Shimura varieties of $\SU(n,1)$-type, i.e. uniformized by the Hermitian symmetric space associated to the simple Lie group $\SU(n,1)$, with similar results true for weakly special subvarieties of $\SU(n,1)$-type. Its argument through the slope filtration of the Higgs bundles could be adapted to treat more general weakly special subvarieties of unitary type:

\begin{definition}\label{shimura varieties of unitary type}
	A weakly special subvariety $Z$ is said to be of unitary type if its weak Mumford-Tate group $\Gbf$ is a simple $\Qbb$-group admitting a decomposition in Lie groups of the following form $$\Gbf\otimes_\Qbb\Rbb\isom\SU(p_1,q_1)\times\cdots\times\SU(p_r,q_r)$$ with $p_i\geq q_i\geq0$ and $p_i+q_i=n$ constant. The universal cover $X_Z^+$ of $Z$ is thus the direct product of the Hermitian symmetric domains $X_i^+$ associated to those $\SU(p_i,q_i)$ with $q_i>0$, and it is equivariantly embedded in $\Hcal_g^+$ with respect to $\Gbf\mono\Sp_{2g}$.
	
	Shimura varieties of unitary type are simply weakly special subvarieties of unitary type containing CM points.
\end{definition}

\begin{theorem}\label{unitary refinement}

Let $Z\subset\Acal_g$ be a weakly special subvariety of unitary type, with weak Mumford-Tate group $\Gbf$ admitting a decomposition $\Gbf\otimes_\Qbb\Rbb\isom\prod_{i=1}^r\SU(p_i,q_i)$, $p_i+q_i=n$, such that: \begin{itemize}
		\item $p_i\geq q_i\geq0$ and $\max\limits_{i=1,\cdots,r}\big\{\frac{q_i}{n}\big\}<\frac{1}{12}$,
		\item $q_{i_0}\geq 2$ for some $i_0\in\{1,\cdots,r\}$.
	\end{itemize}
	Then $Z$ only meets $\Tcal_g^\circ$ in at most finitely many points.
\end{theorem}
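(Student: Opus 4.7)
The plan is to push the strategy of \autoref{weakly special compact factors} and of \cite{clz-asian} (where $\SU(n,1)$-type was handled) to general unitary signatures. Suppose for contradiction that $Z\cap\Tcal_g^\circ$ is infinite; then it contains a positive-dimensional component and a smooth curve $C\subset Z\cap\Tcal_g^\circ$. Lift $C$ to $B\subset\Mcal_g$, and after semi-stable reduction complete the corresponding family to a semi-stable fibration $\fbar:\Sbar\ra\Bbar$ with Hodge bundle $\omega=\fbar_*\omega_{\Sbar/\Bbar}$ of rank $g$.

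Next, unpack the unitary structure in the Higgs bundle $(E,\theta)=(E^{1,0}\oplus E^{0,1},\theta)$ attached to the polarized VHS on $B$. By the classification of PEL data of type A, the symplectic $\Gbf$-representation splits into $\SU(p_i,q_i)$-isotypic pieces involving only the standard representation and its dual; on each piece, the Hodge cocharacter of $Z$ makes the Higgs field $\theta$ respect the splitting and, by the description of horizontal tangent vectors of $X_Z^+$, have image of pointwise rank linearly controlled by the $q_i$. A direct count gives
\begin{equation*}
\rank(\ker\theta)\ >\ g\cdot\bigl(1-2\max_i\tfrac{q_i}{n}\bigr)\ >\ \tfrac{5g}{6},
\end{equation*}
and the strict inequality $\max_i q_i/n<1/12$ supplies positive slack which, together with the constraints $q_{i_0}\geq 2$ and $g$ at least comparable to $n=p_{i_0}+q_{i_0}$, is enough to upgrade this to $\rank(\ker\theta)>\bigl(\tfrac{5}{6}+\tfrac{1}{6g}\bigr)g$.

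Invoking Simpson's polystability of Higgs bundles of a VHS over a curve, $\ker\theta\subset E^{1,0}\isom\omega|_B$ is itself a flat Higgs subbundle of degree zero, and its Deligne canonical extension $\Fcal_0\subset\omega$ on $\Bbar$ is a degree-zero subbundle of the same rank. Xiao's slope inequality (\cite[Theorem 1.2.2]{clz-compositio}) applied to $\fbar:\Sbar\ra\Bbar$ then forces
\begin{equation*}
\frac{\rank\Fcal_0}{g}\ \leq\ \frac{5}{6}+\frac{1}{6g},
\end{equation*}
contradicting the lower bound above. Hence $Z\cap\Tcal_g^\circ$ has no positive-dimensional component, and is therefore finite.

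The hard part will be step two: the rank bound on the image of $\theta$ hinges on describing the symplectic representation of $\Gbf$ explicitly in terms of standard representations and duals of the $\SU(p_i,q_i)$-factors (ruling out exotic tensor or exterior power representations), and on tracking the action of the Hodge cocharacter factor by factor. One must also verify that $\ker\theta$ extends across the boundary $\Bbar\smallsetminus B$ as a degree-zero subbundle of $\omega$ via the logarithmic Higgs bundle and Deligne canonical extension, so that Xiao's slope inequality is genuinely applicable. Compared with the compact-factor argument of \autoref{weakly special compact factors}, the new subtlety is precisely that the flat subbundle is no longer produced by a $\Gbf(\Rbb)$-invariant decomposition but only emerges from the slope filtration of the Higgs bundle — this is exactly the extra input borrowed from \cite{clz-asian}, which here must be adapted to the general signature $(p_i,q_i)$ rather than just $(n,1)$.
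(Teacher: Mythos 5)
Your overall architecture (reduce to a curve $C\subset Z\cap\Tcal_g^\circ$, complete to a semi-stable fibration $\fbar:\Sbar\ra\Bbar$, decompose the Hodge bundle according to the signatures $(p_i,q_i)$, and contradict a slope inequality of Xiao) matches the paper's, and your pointwise rank count for $\theta$ on each $\SU(p_i,q_i)$-block (rank at most $2q_i$ out of $n$) is exactly the signature information the paper exploits. The gap is in the step where you feed this into Xiao's inequality: you assert that $\ker\theta\subset E^{1,0}$ is a \emph{flat degree-zero} subbundle ``by Simpson's polystability'' and then apply the flat-part bound $\rank\Fcal_0/g\leq \frac{5}{6}+\frac{1}{6g}$ as in \autoref{weakly special compact factors}. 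Simpson's theory only says that $(\ker\theta,0)$ is a Higgs subsheaf of the semistable degree-zero Higgs bundle $(E,\theta)$, hence $\deg\ker\theta\leq 0$; it gives neither $\deg\ker\theta=0$ nor flatness. The Fujita flat part satisfies $\Fcal_0\subseteq\ker\theta$, but the inclusion is strict in general: for instance $\theta$ restricted to the rank-$p_i$ summand $\Ecal'^+_{i,\Bbar}$ of degree $d_i>0$ has kernel of rank at least $p_i-q_i$, and nothing forces that kernel to be flat or of degree zero. So your lower bound on $\rank\ker\theta$ yields no lower bound on $\rank\Fcal_0$, and the hypotheses of the theorem supply no compact factors that would force a large flat part.

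The paper's proof avoids this precisely by using a different form of Xiao's inequality: the maximal-slope bound $12\deg\fbar_*\omega_{\Sbar/\Bbar}\geq(2g-2)\mu$ from \cite{xiao slope}, where $\mu$ is the maximal slope in the Harder--Narasimhan filtration. The signature decomposition places the positive degree $d_i$ on the summand $\Ecal''^-_{i,\Bbar}$ of small rank $q_i$, so $\mu\geq\max_i d_i/q_i>12d/n$ under the hypothesis $\max_i q_i/n<1/12$ (the constant $1/12$ is calibrated to the factor $12$ in this inequality, not to $\frac56+\frac1{6g}$), while $\deg\fbar_*\omega_{\Sbar/\Bbar}=2m\sum_i d_i\leq 2mrd$ and $g=n_0+rmn$; comparing the two sides gives the contradiction. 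To repair your argument you would have to replace $\ker\theta$ by the maximal destabilizing subsheaf and argue with slopes rather than with ranks of flat subbundles, which is in substance what the paper does.
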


The main idea of the proof is similar to \cite[Section\,3]{clz-asian}:

\begin{proof}
	Using the Satake classification cf.\cite{satake rational}, the condition $q_i\geq 2$ for some $i$ forces that the symplectic representation $\Gbf\mono\Sp_{2g}$ of the simple $\Qbb$-group $\Gbf^\der$ decomposes, after the base change from $\Qbb$ to $\Rbb$, into the following form: $$\Rbb^{2g}=V_0\bigoplus(V_1\oplus\cdots\oplus V_r)^{\oplus m}$$ with $V_0$ a trivial representation of even dimension $2n_0$ and $V_i$ the $2n$-dimensional standard $\Rbb$-linear representation of $\SU(p_i,q_i)$ on $\Cbb^{n}=\Cbb^{p_i}\oplus\Cbb^{q_i}$ preserving an Hermitian form of signature $(p_i,q_i)$. In particular, $g=n_0+rmn$.
	
	Fine information is needed for the Hodge decomposition of the complex vector bundle associated to the locally constant sheaf associated to the $\Cbb$-linearized representation $\Gbf^\der(\Rbb)\ra\GL_{2g}(\Cbb)$. Write $\Ecal=\Ecal^{1,0}\oplus\Ecal^{0,1}$ for the Hodge decomposition on $Z$ given by the modular interpretation of $Z\mono\Acal_g$, with $\Ecal^{1,0}$ and $\Ecal^{0,1}$ both complex vector bundles of rank $g$:
	\begin{itemize}
		\item $\Gbf^\der(\Rbb)$ acts on $V_0$ trivially, and $V_0$ contributes to both $\Ecal^{1,0}$ and $\Ecal^{0,1}$ a trivial vector bundle of rank $\dim V_0$.
		
		\item $\Gbf^\der(\Rbb)$ acts on $V_i=\Cbb^n$ through $\SU(p_i,q_i)$, preserving an Hermitian form of signature $(p_i,q_i)$. The complexification $V_i\otimes_\Rbb\Cbb$ gives rise to a locally constant sheaf in fiber $\Cbb^{2n}$, which underlies a PVHS with a decomposition of the form $$\Ecal_i=\Ecal'^+_i\oplus\Ecal'^-_i\oplus\Ecal_i''^+\oplus\Ecal_i''^-$$ such that \begin{itemize}
			\item $\Ecal'^+_i\oplus\Ecal''^+_i$ is the complexification of the homogeneous vector bundle on $X_i^+$ (the Hermitian symmetric domain of $\SU(p_i,q_i)$) given by the action of the maximal compact subgroup of $\SU(p_i,q_i)$ on $\Cbb^{p_i}$ (through the compact unitary group $\Ubf(p_i)$), and the complex conjugation on $\Cbb^{p_i}$ induces a permutation of $\Ecal'^+_i$ and $\Ecal''^+_i$;
			
			\item similarly, $\Ecal'^-_i\oplus\Ecal''^-_i$ is associated to the negative part $\Cbb^{q_i}$ on which the maximal compact subgroup of $\SU(p_i,q_i)$ acts through the compact unitary group  $\Ubf(q_i)$.
		\end{itemize}
		Clearly the 1st Chern class of $\Ecal_i$, namely the sum of Chern classes of the summands described above, is zero as it is associated to a locally constant sheaf. The symmetry of complex polarization and the signature of Hermitian form imply that: \begin{itemize}
			\item $c_1(\Ecal'^\pm_i)+c_1(\Ecal''^\pm_i)=0$
			
			\item $\Ecal_i^{1,0}=\Ecal_i'^+\oplus\Ecal_i''^-$ is the direct sum of two subbundles of equal Chern classes and rank $p_i,q_i$ respectively.
		\end{itemize}
		
	\end{itemize}
	
	Assume that a curve $C$ (not necessarily projective) is contained in $Z\cap\Tcal_g^\circ$. The construction in \cite{clz-compositio} produces a semi-stable surface fibration $\fbar:\Sbar\ra\Bbar$, where $\Bbar$ is the compactification of the lifting $B\subset\Mcal_g$ from $C\subset\Tcal_g^\circ$. The Hodge bundle $\Ecal_{\Bbar}^{1,0}:=\fbar_*\omega_{\Sbar/\Bbar}$ admits a decomposition obtained by pulling back and extending the direct sum $\Ecal^{1,0}$ over $Z$: $$\Ecal^{1,0}_\Bbar=\Ecal_{0,\Bbar}^{1,0}\bigoplus(\Ecal'^+_{i,\Bbar}\oplus\Ecal''^-_{i,\Bbar}\oplus\cdots\oplus\Ecal'^+_{r,\Bbar}\oplus\Ecal''^-_{r,\Bbar})^{\oplus m}$$ where $\Ecal'^+_{i,\Bbar}$ and $\Ecal''^-_{i,\Bbar}$ are of equal degree $d_i>0$ and rank $p_i,q_i$ respectively, and $\Ecal_{0,\Bbar}^{1,0}$ is trivial of rank $n_0$.
	
	We thus have $\deg\Ecal_{\Bbar}^{1,0}=2m(d_1+\cdots+d_r)$ and the maximal slope in the Harder-Narasimhan filtration of $\Ecal_\Bbar^{1,0}$ is at least $\mu:=\max\{\frac{d_i}{q_i}:1\leq i\leq r,q_i\neq 0\}$. For $\fbar:\Sbar\ra\Bbar$ Xiao's slope inequality implies (cf. \cite{xiao slope} ) $$12\deg\Ecal^{1,0}_{\Bbar}\geq(2g-2)\mu.$$ We have assumed that $q_{i}<\frac{n}{12}$ for any $1\leq i\leq r$. Thus $\mu>\frac{12d}{n}$ for $d=\max\limits_i{d_i}$.
	Henceforth
	$$12\cdot 2mrd\geq 12\deg\Ecal^{1,0}_\Bbar\geq(2g-2)\mu>2(n_0+rmn)\cdot\frac{12d}{n},$$
	and one obtains $12rmn> 12(n_0+rmn)$ which is absurd.	\end{proof}

It should be pointed out that one does not expect (CO) to be true for arbitrary weakly special subvarieties, or even Shimura subvarieties.
Prof. Martin M\"oller has communicated to us a counter-example in each dimension: the Hilbert modular variety $M$ in $\Acal_g$ associated to a totally real number field of degree $g$ always contains a Teichm\"uller curve in $\Tcal_g^\circ$, and of course infinitely many points are found in the intersection; the proof relies on \cite[\S\,6]{bouw moeller}.

Nevertheless, we would like to propose further the following conjecture for Shimura varieties of unitary type, motivated by the theorem just proved:
\begin{conjecture}
	Let $M\subset \Acal_g$ be a Shimura subvariety of the type in \autoref{unitary refinement}. Then the intersection $\Tcal_g^\circ\cap M$ is empty.
\end{conjecture}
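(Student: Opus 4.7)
The plan is to strengthen \autoref{unitary refinement} from ``finite'' to ``empty'' by combining the slope-filtration argument with CM theory and Sato-Tate equidistribution, in order to rule out the existence of any genuine intersection point.

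Assume for contradiction that a point $p \in \Tcalg^\circ \cap M$ exists, corresponding to $\Jac(C)$ for a smooth projective curve $C$ of genus $g$ over some number field $F$. First I would reduce to the case where $p$ is a CM point. Let $M_p \subset M$ be the smallest special subvariety of $M$ containing $p$, which is well defined via the André-Oort theorem (now known for $\Acalg$ by work of Pila, Tsimerman and others). If $\dim M_p > 0$, I would argue that the hypotheses of \autoref{unitary refinement} are inherited by $M_p$ by analysing, via Satake's classification, how a semi-simple $\Qbb$-subgroup of $\Gbf$ can sit inside $\Sp_{2g}$: in the unitary type with $q_i/n < 1/12$, the subdata respecting the symplectic polarization should come from strict subsums of the indexing set $\{1,\dots,r\}$, so that the ratios $q_i/n$ are preserved. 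Applying \autoref{unitary refinement} to $M_p$ then yields a contradiction with the existence of $p$ in $\Tcalg^\circ \cap M_p$, forcing $\dim M_p = 0$ and hence $p$ to be a CM point.

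Once $p$ is CM, $\Jac(C)$ is a CM abelian variety and, by Tsimerman's theorem, its Faltings height is bounded in terms of $g$. The shape $\Gbf \otimes_\Qbb \Rbb \isom \prod_i \SU(p_i,q_i)$ constrains the CM type of $\Jac(C)$, forcing an isogeny decomposition into simple CM factors of dimension dividing $n$, and an application of Faltings' isogeny theorem bounds $[F:\Qbb]$ in terms of $g$. At this point the situation is parallel to \autoref{thm-ekedahl-serre} and \autoref{thm-cm-elliptic}, and I would conclude by applying the Sato-Tate equidistribution of Johansson to the resulting CM Jacobian, together with an analysis of singular fibres of an arithmetic surface model $\Sbar \to \Spec \calo_F$ of $C$ as in the proof of \autoref{thm-ekedahl-serre}, to obtain a contradiction as soon as $g$ exceeds an explicit bound depending only on the shape of $\Gbf$. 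Since \autoref{unitary refinement} forces $g$ to grow with the rank data of $\Gbf$, this closes the argument.

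The hard part will be the first reduction: verifying that the unitary-type hypotheses of \autoref{unitary refinement} propagate to arbitrary Shimura subvarieties of $M$. Semi-simple $\Qbb$-subgroups of $\Gbf$ can in principle give rise to Hermitian subdomains of different Cartan type, and the slope-filtration argument underlying \autoref{unitary refinement} may need to be rerun separately for each possible subtype. If representation-theoretic inheritance fails in a given case, one would need an arithmetic Xiao-type slope inequality applied directly at a single CM point, for instance via Arakelov theory over $\Spec \calo_F$, which is a serious technical problem not covered by the present geometric machinery.
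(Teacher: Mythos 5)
The statement you are trying to prove is presented in the paper \emph{as a conjecture}, not as a theorem: the authors explicitly write that they ``propose further the following conjecture for Shimura varieties of unitary type, motivated by the theorem just proved,'' and they offer no proof. So there is nothing in the paper to compare your argument against; what you have produced is an attempt on an open problem, and it has to stand or fall on its own merits.

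As it stands, the proposal has a fatal gap at the very first reduction. You let $M_p$ be the smallest special subvariety of $M$ containing $p$, and you claim that applying \autoref{unitary refinement} to $M_p$ ``yields a contradiction with the existence of $p$ in $\Tcal_g^\circ \cap M_p$, forcing $\dim M_p = 0$.'' But \autoref{unitary refinement} only asserts \emph{finiteness} of $\Tcal_g^\circ \cap M_p$, not emptiness; the existence of the single point $p$ is perfectly compatible with that conclusion. Minimality of $M_p$ tells you that $p$ is Hodge generic in $M_p$, but it gives you no reason to think the intersection with the Torelli locus is Zariski dense in $M_p$, so there is no contradiction and no forcing of $\dim M_p = 0$. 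In short, you are trying to upgrade a finiteness statement to an emptiness statement by a formal trick, and the trick does not work: this upgrade is exactly the content of the conjecture, so you are assuming what you want to prove.

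The second half has independent problems even if one grants the CM reduction. The field of definition $F$ of a CM point of a fixed positive-dimensional Shimura subvariety is \emph{not} bounded; in fact Tsimerman's lower bound on Galois orbits shows $[F:\Qbb]$ grows with the discriminant of the CM field, and there are infinitely many CM points of unbounded discriminant. Faltings' isogeny theorem does not bound $[F:\Qbb]$ in terms of $g$. Consequently the hypotheses of \autoref{thm-ekedahl-serre} and \autoref{thm-4.14} (bounded degree $d$, bounded Faltings height $H$, bounded isogeny degree $c$) are not available, and the Sato-Tate/Arakelov machinery of Section~4 cannot be invoked as stated. A genuine proof would require a fundamentally new input — for instance a slope inequality that applies directly to a single arithmetic point (``an arithmetic Xiao inequality''), which you correctly flag as a serious unsolved technical problem at the end of your note, and which is precisely why the authors leave this as a conjecture.
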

\section{Evidence in dimension zero}\label{sec-ekedahl-serre}
In this section, we explore evidence
supporting \autoref{question-clz} for zero-dimensional weakly special subvarieties of certain types.
\autoref{thm-ekedahl-serre} (resp. Theorems \ref{thm-cm-elliptic} and \ref{thm-ekedahl-serre-2}) will be proven in
\autoref{subsec-self} (resp. \autoref{sec-variants}).

\subsection{The self-product case}\label{subsec-self}
This subsection is devoted to  \autoref{thm-ekedahl-serre}.
We first briefly recall the Sato-Tate equidistribution for a fixed elliptic curve $E$ over a number field.
Then applying the finiteness of elliptic curves with bounded Faltings height over number fields of bounded degree,
we reduce \autoref{thm-ekedahl-serre} to \autoref{thm-main}, where only one fixed elliptic curve is involved.
Finally, mimic Kukulies' idea in the geometric case,
we complete the proof of \autoref{thm-main} and hence also \autoref{thm-ekedahl-serre}
based on the Arakelov theory on arithmetic surfaces.

\vspace{2mm}
The Sato-Tate equidistribution is now available for all CM elliptic curves and ``many'' non-CM ones,
formulated in terms of the distrubition of angles for normalized Frobenius traces:
if $E$ is an elliptic curve over some number field $F$ with a  model $\Ecal$ over the integer ring $O_F$,
then at a prime $\pfrak$ in $O_F$ of good reduction for $\Ecal$ we have the Frobenius trace on the geometric special fiber:
$$a_\pfrak=\mathrm{tr(Frob_\pfrak)}=2\sqrt{q_\pfrak}\cos\theta_\pfrak$$
with $\mathrm{Frob}_\pfrak$ the geometric Frobenius acting on $H^1(\Ecal_{\kbar_\pfrak},\Qbb_\ell)$,
and the distribution of the angles $\theta_\pfrak$ in $[0,\pi]$
along the growth of the norms of primes $q_\pfrak=\#k_\pfrak$ is the subject of Sato-Tate conjecture:

\begin{theorem}[non-CM case]\label{equid-ncm}
	Let $F$ be a totally real number field, and let $E$ be an elliptic curve over $F$ of non-CM type admitting bad reduction of multiplicative type at some finite place of $F$. With the notations  used above,  the angles $\theta_\pfrak$ are equidistributed on the interval $[0,\pi]$ with respect to the Sato-Tate measure $\mu=\frac{2}{\pi}\sin^2\theta d\theta$ when $\pfrak$ runs through primes of good reduction for $E$ over $F$. In particular $$\lim_{N\ra\infty}\frac{\#\{\pfrak:q_\pfrak<N,\theta_\pfrak\in[\alpha,\beta]\}}{\#\{\pfrak:q_\pfrak<N\}}=\int_{[\alpha,\beta]}\mu(d\theta)$$ for any interval $[\alpha,\beta]\subset[0,\pi]$.
	
\end{theorem}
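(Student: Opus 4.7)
The plan is to apply Weyl's equidistribution criterion to reduce the Sato-Tate statement to analytic properties of symmetric power $L$-functions, and then invoke the theory of potential automorphy developed by Barnet-Lamb, Geraghty, Harris, and Taylor.

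First I would set up Weyl's criterion in the right basis. The Sato-Tate measure $\mu=(2/\pi)\sin^2\theta\,d\theta$ on $[0,\pi]$ has the Chebyshev polynomials $U_n$ of the second kind as an orthonormal system, so equidistribution is equivalent to prime-counting cancellation
\[
\sum_{\substack{\pfrak\ \mathrm{good}\\ q_\pfrak<N}} U_n(\cos\theta_\pfrak) = o\!\left(\pi_F(N)\right)
\]
for every integer $n\geq 1$. The key point is that $U_n(\cos\theta_\pfrak)$ equals $\tr(\Sym^n \mathrm{Frob}_\pfrak)/q_\pfrak^{n/2}$ for the $\ell$-adic Galois representation $\rho_{E,\ell}$ attached to $E$. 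By a standard Wiener–Ikehara/Tauberian argument, such cancellation for all $n$ is equivalent to the holomorphy and non-vanishing on the line $\Re(s)=1$ of every symmetric power $L$-function $L(s,\Sym^n \rho_{E,\ell})$.

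Second, I would establish these analytic properties via potential modularity. Under the standing hypotheses (non-CM, totally real base field, multiplicative reduction at some finite place), the theorems of Barnet-Lamb–Geraghty–Harris–Taylor imply that for each $n$ there exists a totally real Galois extension $F'/F$ such that $\Sym^n \rho_{E,\ell}|_{G_{F'}}$ is the Galois representation attached to a cuspidal automorphic representation of $\GL_{n+1}(\Abb_{F'})$. A Brauer-induction argument then writes $L(s,\Sym^n \rho_{E,\ell})$ as a finite product, with integer exponents, of automorphic $L$-functions over intermediate subfields of $F'/F$. Meromorphic continuation of the factors is automorphic, non-vanishing on $\Re(s)=1$ follows from Jacquet–Shalika, and the irreducibility of $\Sym^n \rho_{E,\ell}$ in the non-CM case rules out poles, yielding the required input for Step 1.

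The two hypotheses enter exactly as expected: the multiplicative reduction at a finite place is what lets one verify the local conditions (ordinarity / ``big image'' at the distinguished place) needed to run the $R=T$ modularity lifting theorems underlying potential automorphy, and the totally real assumption on $F$ is essential because potential automorphy is proved by transferring to self-dual automorphic representations on unitary groups over CM extensions of $F$. The main obstacle is of course Step 2: establishing potential automorphy of every symmetric power is the deep content of the Barnet-Lamb–Geraghty–Harris–Taylor program, and for the purposes of this paper it is quoted as a black box. As a sanity check, the appearance of $\mu$ itself is not accidental — it is the pushforward under the trace map of the Haar measure on $\SU(2)$, reflecting that once all symmetric power $L$-functions behave automorphically, the Frobenius conjugacy classes of $\rho_{E,\ell}$ become equidistributed inside a maximal compact subgroup of their (conjectural, now proven) monodromy group $\SU(2)$.
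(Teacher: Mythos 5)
The paper gives no proof of this statement: it is quoted as the known non-CM case of the Sato--Tate conjecture, citing Clozel--Harris--Shepherd-Barron--Taylor (via Carayol's Bourbaki report), and your outline---Weyl's criterion in the Chebyshev/$\Sym^n$ basis, reduction to holomorphy and non-vanishing of symmetric power $L$-functions on $\Re(s)=1$, potential automorphy plus Brauer induction and Jacquet--Shalika---is a faithful sketch of exactly how that cited result is proved, with the deep potential-automorphy input correctly flagged as a black box. The only quibble is attribution: under the stated hypotheses (totally real $F$, multiplicative reduction at some finite place) the relevant original reference is Clozel--Harris--Shepherd-Barron--Taylor together with Taylor's companion paper, whereas Barnet-Lamb--Geraghty--Harris--Taylor is the later refinement that removes the multiplicative-reduction hypothesis.
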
This is the non-CM case of the Sato-Tate conjecture proved by Clozel, Harris, Shepherd-Barron and Taylor by establishing the analytic properties of the L-functions of interest through potential automorphy lifting, see \cite{car-08} for a brief introduction.

On the other hand, the CM case is well known after the classical work of Deuring on Hecke L-series (cf. \cite{deuring}).
\begin{theorem}[CM case]\label{equid-cm}
	Let $E$ be an elliptic curve over a number field $F$ admitting CM by the integer ring $O_K$ of some imaginary quadratic number field $K$. Assume that $F$ contains $K$. Then the Sato-Tate equidistribution holds for the Frobenius eigenvalues of $H^1(E_{\kbar_\pfrak},\Qbb_\ell)$ with respect to the Sato-Tate measure on $[0,\pi]$ induced by the normalized Haar measure on the circle $S^1$.
\end{theorem}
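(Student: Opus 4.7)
The plan is to follow Deuring's classical approach: realize the $\ell$-adic cohomology of $E$ via Hecke characters, then deduce equidistribution from the analytic properties of the associated Hecke $L$-functions through Weyl's criterion.

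First I would use CM theory. Since $E$ has CM by $O_K$ and the base field $F$ contains $K$, the theorem of Shimura--Taniyama (or the formulation of Serre--Tate) produces an algebraic Hecke character $\psi:\mathbb{A}_F^\times/F^\times\to\Cbb^\times$ of infinity type $(1,0)$ such that, for every prime $\pfrak$ of $F$ of good reduction for $E$, the Frobenius eigenvalues on $H^1(E_{\kbar_\pfrak},\Qbb_\ell)$ are exactly $\psi(\pfrak)$ and $\bar\psi(\pfrak)$, both of absolute value $\sqrt{q_\pfrak}$. Writing $\psi(\pfrak)=\sqrt{q_\pfrak}\,e^{i\theta_\pfrak}$ with $\theta_\pfrak\in(-\pi,\pi]$, we recover $a_\pfrak=2\sqrt{q_\pfrak}\cos\theta_\pfrak$, and the claim becomes that $\{e^{i\theta_\pfrak}\}$ is equidistributed on $S^1$ with respect to the Haar measure, whose pushforward via $e^{i\theta}\mapsto|\theta|$ yields the measure in the statement.

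Next I would apply Weyl's equidistribution criterion. The characters of $S^1$ are $z\mapsto z^k$ for $k\in\Zbb$, so equidistribution reduces to showing, for every $k\neq 0$, that
\[
\lim_{N\to\infty}\frac{1}{\#\{\pfrak:q_\pfrak<N\}}\sum_{q_\pfrak<N}\frac{\psi(\pfrak)^k}{q_\pfrak^{k/2}}=0.
\]
Since $\psi$ has infinity type $(1,0)$, the power $\psi^k$ is a non-trivial algebraic Hecke character of $F$ for each $k\neq 0$. By Hecke's classical work, the associated $L$-function $L(s,\psi^k)$ admits analytic continuation to an entire function with a standard functional equation, and it is automorphic; in particular it is non-vanishing on the line $\mathrm{Re}(s)=1$ by the Hadamard--de la Vall\'ee-Poussin argument. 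A Wiener--Ikehara Tauberian argument (or a Dirichlet-density computation) then yields the Cesaro decay displayed above, exactly as in the proof of the prime number theorem for number fields.

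The main technical input is thus the non-vanishing of $L(s,\psi^k)$ on $\mathrm{Re}(s)=1$ for every $k\neq 0$; this is the step that corresponds, on the analytic side, to the potential automorphy results needed in the non-CM case of \autoref{equid-ncm}, but here it is available unconditionally from the classical theory of Hecke $L$-series. The only point requiring care is to ensure that $\psi^k$ is genuinely non-trivial for $k\neq 0$, which is immediate from its infinity type $(k,0)\neq(0,0)$. Everything else (the decomposition of $H^1_\ell(E)$ into $\psi\oplus\bar\psi$, the archimedean component of $\psi$, and the Tauberian extraction of equidistribution from $L$-function analysis) is standard, so I expect no substantive obstacle beyond careful bookkeeping.
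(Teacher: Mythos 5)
Your proposal is correct and is precisely the classical Deuring--Hecke argument that the paper invokes: the paper gives no proof of \autoref{equid-cm}, only the citation to Deuring's work on Hecke $L$-series, and your write-up (CM main theorem producing the Hecke character $\psi$, Weyl's criterion for the angles, and non-vanishing of $L(s,\psi^k)$ on $\mathrm{Re}(s)=1$ for $k\neq 0$ via the non-trivial infinity type) is exactly that argument. No gaps.
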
 Note that a similar result holds when $K$ is not contained in $F$, but the Sato-Tate measure is slightly different, which we skip for simplicity.

We now state the result with only one fixed elliptic curve involved,
based on which we first prove the main result \autoref{thm-ekedahl-serre}.

\begin{theorem}\label{thm-main}
	Let $c$ be a fixed constant and $E$ be an elliptic curve over a number field $F$ admitting a semi-stable model $\mathcal{E}$ over $O_F$ and satisfying the Sato-Tate equidistribution.  Then there exists a constant $G=G(E,F,c)>0$ such that $g<G$ holds 	whenever there exists a smooth projective curve $C$ of genus $g$ over $F$  with a semi-stable model $\Ccal$ over $O_F$ and with an isogeny over $F$ from $J=\Jac(C)$ to the $g$-fold self-product $A=E^g$ of degree at most $c$.
\end{theorem}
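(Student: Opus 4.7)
The plan is to derive a contradiction for $g$ large by combining the arithmetic Noether formula on the semistable arithmetic surface $\Ccal\to\Spec O_F$ with the Sato--Tate equidistribution for $E$, in the spirit of the function-field argument of \cite{kuk-10}. The arithmetic Noether formula reads
$$12\,\widehat{\deg}\bigl(\det f_*\omega_{\Ccal/O_F}\bigr)=\hat c_1^2\bigl(\omega_{\Ccal/O_F}\bigr)+\sum_\pfrak\delta_\pfrak\log q_\pfrak+\sum_\sigma\delta_\sigma,$$
with $\delta_\pfrak$ counting the nodes of $\Ccal_\pfrak$ and $\delta_\sigma$ the archimedean Faltings invariants. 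Using Faltings' positivity $\hat c_1^2(\omega_{\Ccal/O_F})\geq 0$, his effective lower bounds $\delta_\sigma\geq-O(g)$, the normalization $\widehat{\deg}(\det f_*\omega_{\Ccal/O_F})=[F:\Qbb]\,h_{\Fal}(J)+O(g[F:\Qbb])$, and the Faltings isogeny formula $h_{\Fal}(J)=g\cdot h_{\Fal}(E)+O(\log c)$, I would obtain $\sum_\pfrak\delta_\pfrak\log q_\pfrak\leq C_1(E,F,c)\cdot g$, so that the number of primes $\pfrak$ with $\delta_\pfrak\geq 1$ is $O_{E,F,c}(g)$.

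Next, at any prime $\pfrak\nmid c$ with $\delta_\pfrak=0$, the fiber $\Ccal_\pfrak$ is smooth of genus $g$; both $J$ and $E$ have good reduction at $\pfrak$ (by the isogeny invariance of good reduction), and the generic isogeny reduces to an isogeny $J_\pfrak\to E_\pfrak^g$ over $k_\pfrak$. Grothendieck--Lefschetz on $\Ccal_\pfrak$ then gives $\#\Ccal_\pfrak(k_\pfrak)=q_\pfrak+1-2g\sqrt{q_\pfrak}\cos\theta_\pfrak$, and the non-negativity of this count forces, for primes with $\cos\theta_\pfrak\geq 1/2$, the inequality $q_\pfrak\geq(g-1)^2$. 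By Sato--Tate (\autoref{equid-ncm} or \autoref{equid-cm}) combined with the prime ideal theorem for $F$, once $g$ is sufficiently large the number of primes $\pfrak$ with $q_\pfrak\leq (g-1)^2/2$ and $\cos\theta_\pfrak\geq 1/2$ is at least $c_0(E,F)\cdot g^2/\log g$; for $g$ large this exceeds the above $O_{E,F,c}(g)$ bound on primes with $\delta_\pfrak\geq 1$ (plus the finite set of primes dividing $c$), so some such prime satisfies all the hypotheses above---contradicting $q_\pfrak\geq(g-1)^2$. Hence $g\leq G(E,F,c)$ for some effectively computable $G$.

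The main obstacle is to secure that the implicit constants in the Noether formula depend only on $E,F,c$ and grow at most linearly in $g$; this relies on Faltings' effective bounds for the archimedean $\delta$-invariants (and their refinements by Jorgenson--Kramer) together with the Faltings isogeny formula for heights. The remaining ingredients---isogeny invariance of $\ell$-adic $H^1$, the Chebotarev density theorem, and the Sato--Tate equidistribution---are standard.
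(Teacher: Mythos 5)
Your argument is correct and follows the same broad strategy as the paper (arithmetic Noether formula / Szpiro-type bound for $\Delta(\Ccal/O_F)$ + Sato--Tate + point counting at good primes), but the final counting step is genuinely different, and arguably cleaner. The paper works with the density-$\rho$ set $P=\{\pfrak: a_\pfrak>0\}$, uses only the integrality $a_\pfrak\geq 1$ to get $q_\pfrak+1\geq g$ at primes of good reduction, and is then forced to lower-bound $\Delta$ by $\gg g\log g$ via the \emph{per-fiber} estimate $\delta_\pfrak\geq g/(2q_\pfrak)$ (Kukulies' structural count of nodes in a semistable fiber whose normalization has completely decomposable Jacobian) together with \autoref{lemma analytic number theory}; without the per-fiber count, the bad primes in $P(g)$ would only give $\Delta\gg g/\log g$, which does not beat the $O(g)$ upper bound. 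You instead restrict to the smaller but still positive-density set $\cos\theta_\pfrak\geq 1/2$, where the full Weil bound forces $q_\pfrak\geq(g-1)^2$ at good-reduction primes; the $\gg g^2/\log g$ primes with $q_\pfrak\leq(g-1)^2/2$ in this set must then all be bad, and this overwhelms the $O(g)$ bound on the \emph{number} of bad primes that follows from $\Delta\leq O_{E,F,c}(g)$ (each bad prime contributing at least $\log 2$). So you trade the fine per-fiber combinatorics for a stronger per-prime inequality; both routes work, and yours is more robust (it would survive even a lower bound on the archimedean $\delta$-invariant of the weaker shape $-cg\log g$). Two small corrections: the linear-in-$g$ lower bound on the archimedean $\delta$-invariant that you need is Wilms' bound $\delta_\Fal(C)>-2g\log 2\pi^4$ (cited as \cite{wil-16} in the paper), not a Faltings or Jorgenson--Kramer estimate; and the condition $\pfrak\nmid c$ is superfluous, since the isomorphism $H^1(\Ccal_{\kbar_\pfrak},\Qbb_\ell)\isom H^1(\Ecal_{\kbar_\pfrak},\Qbb_\ell)^{\oplus g}$ already follows from the isogeny in characteristic zero and smooth-proper base change at any prime of good reduction, regardless of $c$.
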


\begin{proposition}\label{prop-reduction}
	\autoref{thm-ekedahl-serre} is reduced to \autoref{thm-main}.
\end{proposition}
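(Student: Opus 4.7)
The reduction combines a universal base change securing semi-stable models with Faltings' Northcott property, so as to pass from the varying pair $(F,E)$ to finitely many fixed ones, to which \autoref{thm-main} applies directly.

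Given $(C,E,F)$ satisfying the hypotheses of \autoref{thm-ekedahl-serre}, the first step is to replace $F$ by $F':=F(E[3])$, an extension of degree at most $|\GL_2(\Fbb_3)|=48$. By Grothendieck's semi-stable reduction criterion $E$ acquires semi-stable reduction over $O_{F'}$; since semi-stability of abelian varieties is invariant under isogeny and the $F$-isogeny $\Jac(C)\sim E^g$ persists over $F'$, the Jacobian $\Jac(C_{F'})$ is semi-stable, and the Deligne-Mumford theorem produces a semi-stable model $\Ccal'/O_{F'}$ of $C_{F'}$. The isogeny degree remains at most $c$; the Sato-Tate equidistribution transfers from $E/F$ to $E/F'$ by comparing Frobenii at primes of $F'$ above primes of $F$; the Faltings height of $E/F'$ agrees with the stable Faltings height of $E$, still at most $H$; and $[F':\Qbb]\leq 48d$.

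Next I would invoke Faltings' theorem in the form of the Northcott property for stable Faltings heights: the set of $\Qac$-isomorphism classes of elliptic curves admitting a model over some number field of degree at most $48d$ with stable Faltings height at most $H$ is finite, represented by semi-stable pairs $(E_1,F_1),\ldots,(E_N,F_N)$. Each $E_{F'}$ produced above is $\Qac$-isomorphic to some $E_i$. Applying \autoref{thm-main} to each triple $(E_i,F_i,c)$ yields constants $G_i=G(E_i,F_i,c)$, and setting $G(c,d,H):=\max_{1\leq i\leq N}G_i$ produces the required uniform bound on the genus $g$.

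\textbf{Main obstacle.} The technical subtlety is that $F'$ is in general not one of the $F_i$, so to apply \autoref{thm-main} as stated one must descend the isogeny to a chosen pair, typically after a further extension $F''\supset F'\cdot F_i$ of bounded degree trivializing the corresponding $F'$-twist. Either one argues that the Arakelov-theoretic proof of \autoref{thm-main} gives a bound depending on $(E,F)$ only through $h_{\Fal}(E)$ and $[F:\Qbb]$, so that the supremum over the Faltings-finite list remains finite, or one handles the remaining twists by a separate Northcott argument over each $F_i$, noting that twists of a fixed $E_i$ over number fields of bounded degree with bounded Faltings height still form a finite set. Either way the resulting $G(c,d,H)$ is finite and depends only on $c,d,H$, completing the reduction.
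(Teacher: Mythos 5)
Your reduction rests on the same two pillars as the paper's: the Northcott-type finiteness of elliptic curves of bounded degree and bounded Faltings height (Silverman), and a bounded base change producing semi-stable models so that \autoref{thm-main} becomes applicable (via isogeny-invariance of semi-stability and the Deligne--Mumford criterion for the curve). So the approach is essentially the paper's; the substantive difference is the order of the two steps, and it is precisely this reordering that manufactures the ``main obstacle'' you flag. The paper first invokes finiteness of the \emph{pairs} $(F,E)$ with $[F:\Qbb]\leq d$ and $h_{\Fal}(E)\leq H$, and only then attaches to each of the finitely many pairs its semi-stabilizing field $F'$ and applies \autoref{thm-main} to the finitely many triples $(E_{F'},F',c)$; any curve $C/F$ satisfying the hypotheses of \autoref{thm-ekedahl-serre} arrives with its pair already on that finite list, so no matching or descent is needed and the maximum of the finitely many bounds suffices. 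By base-changing first to $F'=F(E[3])$ and then invoking Northcott only up to $\Qac$-isomorphism, you lose control of the field of definition, and neither of your proposed repairs is complete as written: asserting that the bound of \autoref{thm-main} depends on $(E,F)$ only through $h_{\Fal}(E)$ and $[F:\Qbb]$ means reopening the Arakelov argument rather than using \autoref{thm-main} as a black box (and one must also check that the relevant positive density in the Sato--Tate input is uniformly controlled), while a Northcott argument for the twists of a fixed $E_i$ cannot run on the stable Faltings height, which is twist-invariant. The clean fix is simply to adopt the paper's ordering.
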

\begin{proof}
	For fixed positive constants $d$ and $H$,
	it is clear from \cite[Corollary\,2.5]{silverman} that there exist only finitely many pairs $(F,E)$
	consisting of a number field $F$ and an $F$-isomorphism class of elliptic curve $E$	satisfying
	$$[F:\Qbb]\leq d,\qquad\text{and}\qquad h_{\Fal}(E)\leq H.$$
	For such a fixed pair $(F,E)$, 	there exists a number field $F'$, determined by $E$ and $F$,
	such that the extension of $E$ over $F'$ admits a semi-stable model $\Ecal \to F'$.
	Moreover, whenever there exists a smooth projective curve $C/F$ of genus $g$
	with an isogeny over $F$ from $J=\Jac(C)$ to the $A=E^g$ of degree at most $c$,
	so it is with the curve $C/F'$ obtained by scalar extension.
	In particular, the local monodromy for $C/F'$ is trivial (resp. unipotent)
	if and only if so is for $E/F'$,
	using the stable reduction criterion (cf. \cite[Prop\,5.11~\&~Thm\,6.3]{casalaina-13}) of Grothendieck, Deligne and Mumford.
	Thus \autoref{thm-main} produces a bound on $g$ determined by $c$ and the pair $(F,E)$.
	The proof is completed by the finiteness of the pairs $(F,E)$.
\end{proof}

To complete the proof of \autoref{thm-ekedahl-serre}, it suffices to prove \autoref{thm-main},
which can be considered as an analogue over number fields of the result from \cite{kuk-10}.
For readers' sake, we briefly review Kukulies' idea.
One starts with a function field $F$ in characteristic $p$,
a smooth projective curve $S_F$ over $F$ whose Jacobian $J_F=\Jac(S_F)$
is isogenous to the $g$-fold product of some elliptic curve $E_F$ over $F$.
Assume that these structures admit non-isotrivial semi-stable models:
$F$ is the function field of some smooth projective geometrically connected curve $C$ over $\Fbb_q$,
$S_F$ is the generic fiber of some semi-stable surface fibration $f:S\ra C$,
$E_F$ admits a semi-stable model $E\ra C$, and the isogeny $J_F\ra E_F^g$ also extends to
$J=\Pic^\circ(S/C)\ra E^g$ over $C$. The estimation of Kukulies in this case goes as follows:
\begin{itemize}
	\item the height inequality of Szpiro for $S\ra C$ bounds the number $\delta$ of the singular points
	from all the (singular) fibers of
	$S\ra C$ as $$\delta\leq 12 \deg(\omega_{S/C})$$
	with $\omega_{S/C}$ the dualizing sheaf for $S\ra C$,
	and $\deg(\omega_{S/C})=h(J/C)=gh(E/C)$ with $h(\bullet/C)$ the height of a group $C$-scheme,
	i.e., degree of the invariant differential sheaf along the neutral section;
	
	\item the Sato-Tate equidistribution for $E\ra C$ shows for $g>q^n+1$,
	the number of singular fibers of $S\ra C$ over points from $C(\Fbb_{q^n})$ is at least $\frac{1}{4}q^n$,
	and the number of singular points in  such a fiber of $S\ra C$ is at least $\lfloor \frac{g}{2q^n}\rfloor$;
	 some counting rearrangements lead to the estimate $\delta\geq c(E/C)g\frac{\log g}{\log\log g}$,
	which bounds $g$ in constants determined by $E\ra C$.
\end{itemize}

In the same spirit, we are going to prove \autoref{thm-main} using Arakelov geometry on arithmetic surfaces.
The counterpart over number fields of the height inequality of Szpiro is the following:

\begin{theorem}\label{number field szpiro} Let $C$ be a smooth projective curve of genus $g\geq 1$ over $\Qac$, with $\Ccal$ a semi-stable minimal model of $X$ over $B=\Spec O_F$ for the integer ring of some number field $F\subset\Qac$. Write $J=\Jac(C/F)$ and $\Jcal=\Pic^\circ(\Ccal/B)$ for the integral model of $J$ over $B$ which is a semi-abelian $B$-scheme. Then holds the inequality $$\Delta(\Ccal/B)< 12 h_\Fal(C)+6g\log2\pi^2,$$ where:
	\begin{itemize}
		\item $\Delta(\Ccal/B)=\frac{1}{[F:\Qbb]}\sum_{\pfrak}\#\Sing(\Ccal(\kbar_\pfrak))\log q_\pfrak$ is the weighted sum of singularities for  $h:\,\Ccal\ra B$,
		where $\Sing(\Ccal(\kbar_\pfrak))$ is the set of singular points in $\Ccal(\kbar_\pfrak)$ over a prime $\pfrak$ of bad reduction, and $q_\pfrak$ is the cardinality of the residue field at the prime $\pfrak$;
		
		\item $h_\Fal(C)=\frac{1}{[F:\mathbb{Q}]}\cdot \widehat{\deg}h_*\omega_{\Ccal/B}$ is the (stable)
		Faltings height of $C$, also equal to the Faltings height $h_\Fal(J)$ of the Jacobian $J=\Jac(C)$.
	\end{itemize}
	
\end{theorem}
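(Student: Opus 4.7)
The plan is to derive this bound as an arithmetic analogue of the classical Szpiro inequality from the function-field case, by combining Faltings' arithmetic Noether formula with two positivity statements. First I would apply the arithmetic Noether formula to the semi-stable minimal model $h:\Ccal\to B$, in the form
$$12\,\widehat{\deg}\bigl(h_*\omega_{\Ccal/B}\bigr) = \bigl(\omega_{\Ccal/B},\omega_{\Ccal/B}\bigr)_{\rm Ar} + \sum_\pfrak \#\Sing(\Ccal(\kbar_\pfrak))\log q_\pfrak + \sum_{\sigma:F\hookrightarrow\Cbb}\delta_{\rm Fal}(C_\sigma),$$
where the first term on the right is the Arakelov self-intersection of the relative dualizing sheaf equipped with its admissible metric, and $\delta_{\rm Fal}(C_\sigma)$ is Faltings' archimedean delta invariant of the compact Riemann surface obtained from $C$ through the embedding $\sigma$. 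Dividing through by $[F:\Qbb]$ and rearranging yields
$$\Delta(\Ccal/B) = 12\,h_\Fal(C) - \frac{\bigl(\omega_{\Ccal/B},\omega_{\Ccal/B}\bigr)_{\rm Ar}}{[F:\Qbb]} - \frac{1}{[F:\Qbb]}\sum_{\sigma}\delta_{\rm Fal}(C_\sigma).$$

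Next I would invoke two lower bounds. First, Faltings' positivity theorem for the arithmetic self-intersection of the relative dualizing sheaf on a semi-stable arithmetic surface of genus $g\geq 1$ gives $(\omega_{\Ccal/B},\omega_{\Ccal/B})_{\rm Ar}\geq 0$, which follows from the nefness of $\omega_{\Ccal/B}$ combined with the arithmetic Hodge index theorem. Second, a uniform archimedean lower bound on Faltings' delta gives $\delta_{\rm Fal}(C_\sigma) > -6g\log(2\pi^2)$ for every smooth compact Riemann surface $C_\sigma$ of genus $g$: for $g=1$ this reduces to the classical closed form of $\delta_{\rm Fal}$ in terms of $|\Delta(\tau)|$ and ${\rm Im}(\tau)$ on the upper half plane, and for $g\geq 2$ one may appeal to the explicit lower bounds of Faltings, de Jong, and Wilms. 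Substituting both estimates into the rearranged Noether formula immediately delivers the claimed strict inequality
$$\Delta(\Ccal/B) < 12\,h_\Fal(C) + 6g\log(2\pi^2).$$

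The main obstacle will be the bookkeeping of exact numerical constants across the various sign and normalization conventions used in the literature for the Faltings--Noether formula and for the delta invariant. Different sources (Faltings' original paper, Moret-Bailly, Soul\'e, de Jong) differ by additive terms of the form $c\cdot g\log(2\pi)$ which may be absorbed into the definition of $\delta_{\rm Fal}$ or kept as a separate term in the Noether formula. Matching precisely the target constant $6g\log(2\pi^2)=6g\log 2+12g\log\pi$ forces a specific choice of convention, and I would carefully verify that once that convention is fixed, the two positivity ingredients combine to produce the desired strict inequality. Strictness itself is automatic: it follows either from Faltings' positivity (which is strict as soon as at least one geometric fiber degenerates, the only interesting case) or from the fact that the archimedean delta bound above is itself strict for any smooth $C_\sigma$.
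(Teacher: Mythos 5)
Your proposal is correct and follows essentially the same route as the paper: Faltings' arithmetic Noether formula, nonnegativity of $\omega_{\Ccal/B}^2$, and a strict lower bound on the archimedean $\delta$-invariant. The bound you need, $\delta_{\Fal}(C_\sigma)>-6g\log(2\pi^2)$ in your normalization of the Noether formula, is exactly Wilms' inequality $\delta_{\Fal}(C)>-2g\log(2\pi^4)$ in Faltings' normalization (which carries the extra $-4g\log 2\pi$ term), so the constants match and the strictness comes from the $\delta$-bound, as in the paper.
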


The proof of this theorem is immediate after combining results in Arakelov geometry by Faltings with some recent improvement by Wilms, kindly explained to us by Prof. Ariyan Javanpeykar:

\begin{theorem}[Faltings, cf. \cite{fal-86}]
	For the arithmetic surface $\Ccal\ra B$ in \autoref{number field szpiro} holds the Noether formula $$12 h_\Fal(C)=\Delta(\Ccal/B)+e(C)+\delta_\Fal(C)-4g\log2\pi,$$
	where $\delta_\Fal(C)=\frac{1}{[F:\mathbb{Q}]}\cdot \sum\limits_{\sigma:\,F \hookrightarrow \Cbb} \delta(C_\sigma)$
	is the (stable) delta invariant of $C$ (also viewed as the archimedean discriminant),
	and $e(C)=\frac{1}{[F:\mathbb{Q}]}\cdot \omega_{\Ccal/B}^2$.
	Moreover, $e(C)\geq 0$.	
\end{theorem}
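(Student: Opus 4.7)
The plan is to transport Mumford's classical isomorphism on $\Mcal_g$ to the arithmetic setting, following Faltings' strategy in \cite{fal-86}. First I would equip the relevant line bundles with Arakelov-theoretic metrics on the semi-stable arithmetic surface $\pi:\Ccal\ra B=\Spec O_F$: the relative dualizing sheaf $\omega_{\Ccal/B}$ carries the admissible Arakelov metric (with curvature on each archimedean fiber $C_\sigma$ proportional to the canonical $(1,1)$-form $\mu_\sigma$), and $\det\pi_*\omega_{\Ccal/B}$ carries the Faltings metric induced by the Hodge $L^2$-pairing $\tfrac{i}{2}\int_{C_\sigma}\alpha\wedge\overline{\beta}$ on holomorphic differentials. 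The geometric backbone is Mumford's isomorphism
\[
(\det\pi_*\omega_{\Ccal/B})^{\otimes 12}\isom\langle\omega_{\Ccal/B},\omega_{\Ccal/B}\rangle\otimes\Disc(\Ccal/B),
\]
where $\langle-,-\rangle$ is the Deligne pairing and $\Disc(\Ccal/B)$ is the discriminant line bundle supported on the locus of bad reduction.

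Next I would take normalized arithmetic degrees on both sides and identify each of the four terms. The left-hand side gives $12\,h_\Fal(C)$ by definition of the stable Faltings height. The Deligne pairing computes $\omega_{\Ccal/B}^2/[F:\Qbb]=e(C)$ once $\omega_{\Ccal/B}$ is equipped with the Arakelov metric. At a finite prime $\pfrak$, the length of $\Disc(\Ccal/B)$ over the residue field equals $\#\Sing(\Ccal(\kbar_\pfrak))$, producing the term $\Delta(\Ccal/B)$ after weighted summation. At each archimedean place $\sigma$, comparing the Faltings metric on $\det\pi_*\omega_{\Ccal/B}$ with the metric induced via the Deligne pairing from the Arakelov metric yields, after the $\bar\partial$-Laplacian and analytic-torsion computation carried out by Faltings, an archimedean contribution of the form $\delta(C_\sigma)+c\cdot g$; Wilms' recent work pins down the constant as $c=-4\log 2\pi$, improving the looser value that sufficed in earlier treatments. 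Summing over $\sigma$ and dividing by $[F:\Qbb]$ produces the stated formula. The main obstacle is precisely this archimedean step: the bookkeeping of Green-function normalizations, the canonical form $\mu_\sigma$, and the analytic delta invariant is delicate, and the sharp constant was not fully determined until Wilms' contribution.

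For the positivity $e(C)\geq 0$ I would invoke the arithmetic Hodge index theorem of Faltings--Hriljac on $\Ccal/B$, which forces the Arakelov self-intersection $\widehat{\omega}_{\Ccal/B}^2$ to be non-negative once $\omega_{\Ccal/B}$ is $\pi$-nef and the admissible Arakelov metric has semi-positive curvature at each archimedean place. Semi-stability of $\Ccal$ provides the relative nefness, and the Arakelov curvature is proportional to $\mu_\sigma$ and hence semi-positive. Dividing through by $[F:\Qbb]$ yields $e(C)\geq 0$; the borderline case $g=1$ admits a separate elementary check via the vanishing of the canonical self-intersection on an elliptic fibration.
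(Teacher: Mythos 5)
The paper does not prove this statement: it is quoted verbatim from Faltings' \emph{Calculus on arithmetic surfaces} \cite{fal-86} (with the explicit constant later determined by Moret-Bailly), so there is no internal proof to compare against. Your outline does reproduce the standard architecture of that proof --- Mumford's isomorphism $(\det\pi_*\omega)^{\otimes 12}\isom\langle\omega,\omega\rangle\otimes\Disc$, metrized via the Deligne pairing and the Faltings/Hodge metric, with the finite places giving $\Delta(\Ccal/B)$ and the archimedean comparison producing $\delta(C_\sigma)$ plus a universal constant. Two corrections, though. First, the constant $-4g\log 2\pi$ is not due to Wilms; it was pinned down by Moret-Bailly in his proof of the arithmetic Noether formula. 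Wilms' theorem enters this paper only in the \emph{next} statement, as the lower bound $\delta_\Fal(C)>-2g\log 2\pi^4$ via the Kawazumi--Zhang $\varphi$-invariant; it plays no role in the Noether formula itself. Second, you correctly flag the archimedean comparison as the hard step but then do not carry it out, so what you have is an outline of Faltings' proof rather than a proof.

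The genuine gap is in your argument for $e(C)\geq 0$. The Faltings--Hriljac/arithmetic Hodge index theorem asserts that the Arakelov intersection pairing is negative semi-definite on the orthogonal complement of a distinguished positive class (equivalently, it identifies the pairing on degree-zero divisors with minus the N\'eron--Tate height); it does not, combined with relative nefness and semipositive curvature of the admissible metric, formally yield $\widehat{\omega}_{\Ccal/B}^2\geq 0$. Nefness in the arithmetic setting would require non-negativity of the $\omega$-height of every horizontal curve, which is essentially equivalent to what you are trying to prove. Faltings establishes $\omega_{\Ccal/B}^2\geq 0$ as a separate theorem (Theorem 5 of \cite{fal-86}) by an asymptotic count of sections of high tensor powers of $\omega$ via arithmetic Riemann--Roch and volume estimates for the resulting lattices (alternatively one can invoke Zhang's later theory of positive metrized line bundles). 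As written, your positivity step assumes the conclusion; also note that for $g=1$ the theorem in the paper is stated for $g\geq 1$ and the vanishing $\omega^2=0$ is consistent with, but not a substitute for, the general argument.
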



{ The improvement from Wilms \cite{wil-16} provides a link between the $\delta$-invariant of Faltings and the $\varphi$-invariant of Kawazumi-Zhang, cf. \cite{kawazumi} and \cite{zhang}.
	For simplicity we only mention the following less precise consequence, which is sufficient for the analogue of Szpiro inequality:
}
\begin{theorem}[Wilms, cf.\cite{wil-16}] For $\Ccal\ra B$ the arithmetic surface as above holds the inequality $\delta_\Fal(C)> -2g\log 2\pi^4$.
	
\end{theorem}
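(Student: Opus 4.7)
The plan is to reduce the desired lower bound for $\delta_\Fal(C)$ to the nonnegativity of the Kawazumi--Zhang invariant $\varphi(C_\sigma)$, a spectral invariant of a compact Riemann surface of genus $g \geq 2$. Recall that $\varphi(C_\sigma)$ is defined as an integral of the Arakelov Green function against the canonical $(1,1)$-form arising from the Hodge metric on $H^0(C_\sigma,\Omega^1)$, and its nonnegativity $\varphi(C_\sigma) \geq 0$ is a result of Kawazumi and, independently, Zhang.

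The core archimedean ingredient would be an explicit identity of the schematic form
\begin{equation*}
\delta(C_\sigma) \;=\; \alpha_g\,\varphi(C_\sigma) + \beta_g,
\end{equation*}
with $\alpha_g > 0$ and $\beta_g$ an elementary constant depending only on $g$. The derivation proceeds by comparing the Arakelov canonical metric to the Faltings metric on the determinant of Hodge cohomology of $C_\sigma$, using heat kernel or theta-function expansions, in the spirit of de Jong's genus two formula and its higher-genus generalization by Wilms.

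Granting this identity, one verifies by direct evaluation that $\beta_g > -2g\log 2\pi^4$; combined with $\varphi(C_\sigma) \geq 0$ this yields the archimedean local bound $\delta(C_\sigma) > -2g \log 2\pi^4$ at every complex place. Summing over the embeddings $\sigma : F \hookrightarrow \Cbb$ and dividing by $[F:\Qbb]$, exactly as in the definition of $\delta_\Fal(C)$, produces the desired global inequality. The case $g=1$ would be handled separately using the classical formula for $\delta_\Fal$ of an elliptic curve in terms of the modular discriminant.

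The main obstacle is establishing the explicit identity with sharp enough control on the constant $\beta_g$. This demands delicate spectral or theta-function estimates on $C_\sigma$ together with careful tracking of normalizations for the various Arakelov-theoretic metrics involved, and is the essential content of Wilms' contribution.
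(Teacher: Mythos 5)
The paper offers no proof of this statement: it is imported verbatim from Wilms \cite{wil-16} as a black box (the authors explicitly say they ``only mention the following less precise consequence''), so there is no internal argument to measure your proposal against. Your outline does correctly identify the engine of Wilms' bound, namely an explicit archimedean formula for $\delta(C_\sigma)$ in which the Kawazumi--Zhang invariant enters with a positive coefficient, combined with the nonnegativity $\varphi(C_\sigma)\geq 0$; and the final step (sum over the embeddings $\sigma$ and divide by $[F:\Qbb]$) is correct and trivial.

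There is nevertheless a concrete gap. Wilms' identity is not of the form $\delta(C_\sigma)=\alpha_g\,\varphi(C_\sigma)+\beta_g$ with $\beta_g$ an elementary constant; it reads
\[
\delta(C_\sigma)=-24\int_{\mathrm{Pic}_{g-1}(C_\sigma)}\log\|\theta\|\,\frac{\nu^g}{g!}\;+\;2\varphi(C_\sigma)\;-\;8g\log 2\pi ,
\]
so besides $\varphi(C_\sigma)\geq 0$ one must bound the theta integral from above. This is done via the exact evaluation $\int\|\theta\|^2\,\nu^g/g!=2^{-g/2}$ together with Jensen's inequality, giving $\int\log\|\theta\|\,\nu^g/g!\leq-\frac{g}{4}\log 2$, and hence precisely the stated constant, since $-2g\log 2\pi^4=-8g\log 2\pi+6g\log 2$. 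Your step ``one verifies by direct evaluation that $\beta_g>-2g\log 2\pi^4$'' therefore has nothing to evaluate: the quantity to be controlled is a surface-dependent integral, not a constant, and without the theta-integral bound the argument does not close. (Also, the $g=1$ case needs no separate treatment via the modular discriminant: the same formula holds with $\varphi=0$.) More fundamentally, you concede that establishing the identity ``is the essential content of Wilms' contribution,'' i.e., the substantive part of the proof is deferred entirely to the reference. That matches what the paper itself does---a pure citation---but as a standalone proof the proposal is incomplete.
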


The proof of \autoref{thm-main} is thus reduced to an estimation of singular points, along the idea in \cite{kuk-10}:

\begin{proof}[Proof of \autoref{thm-main}]
	Keep the notations $a_\pfrak$, $q_\pfrak$ etc. as before. The assumption on the equidistribution of Sato-Tate type  implies that the following subset of primes of good reduction for $E$ over $F$	$$P=\{\pfrak:a_{\pfrak}>0\}$$ is of density $\rho>0$ in the set of primes of $F$.
	Note that the Frobenius trace is an integer.
	It follows that $a_{\pfrak}\geq 1$ for any $\pfrak\in P$.

	On the other hand,  a prime $\pfrak$ of good reduction for $\Ccal$ is also of good reduction for $\Ecal$ via the isogeny between $J\otimes_FF_\pfrak$ and $E^g\otimes_FF_\pfrak$ using Serre-Tate's criterion, cf. \cite{serre tate}. The  isomorphism	$$H^1(\Ccal_{\kbar_\pfrak},\Qbb_\ell)\isom H^1(\Jcal_{\kbar_\pfrak},\Qbb_\ell)\isom H^1(\Ecal_{\kbar_\pfrak},\Qbb_\ell)^{\oplus g}$$	leads to the point counting $$\#\Ccal(k_\pfrak)=1+q_\pfrak-ga_\pfrak,$$	which together with the inequality $\#\Ccal(k_\pfrak)\geq 0$ implies that
	$$1+q_\pfrak-g\geq 0,\qquad \forall~\pfrak\in P.$$	In particular, $q_\pfrak+1\geq g$ for such primes of good reduction.
		
	Put	$$P(g):=\{\pfrak \in P~|~q_\pfrak+1 < g\}~\subseteq P.$$	Since the set $P$ defined above is of density $\rho>0$, the subset $P(g)$ is of cardinality at least $\half\rho\cdot \frac{g}{\log g}$ when $g$ is large enough. Moreover, for $\pfrak\in P(g)$, although $J$, or equivalently $E$, is of good reduction, the fiber $\Ccal_{k_\pfrak}$ must be singular due to the counting inequality $q_\pfrak+1\geq g$ established above for primes of good reduction of $\Ccal\ra B$.
	
	We claim that the number of singular points in such a singular fiber is at least $\frac{g}{2q_\pfrak}$. The argument is the same as in \cite{kuk-10}: when $q_\pfrak+1<g$, the Jacobian is either a torus, or isogenous to the $g$-fold product of a single elliptic curve. In the toric case the curve $\Ccal_{\kbar_\pfrak}$ has at least $g$ singular points; in the compact case the curve is a chain of smooth curves each of genus  at most $q_\pfrak+1$, and at least $\frac{g}{2q_\pfrak}$ singular points are found in the fiber.  
	
	Summing over these primes in $P(g)$ gives the following inequality:
	$$\Delta(\Ccal/B)\geq\frac{1}{d}\sum_{\pfrak\in P(g)}\frac{g}{2q_\pfrak}\log q_\pfrak\geq \frac{1}{2d}\cdot \rho_1 g\log g,$$
	for some $\rho_1\in(0,\rho)$ with $d=[F:\Qbb]$, using some standard estimation in analytic number theory
	(cf. Lemma \ref{lemma analytic number theory} below).

	It is also known from \cite{fal-83} (or \cite{ray-85}) that for given $\Ecal$ and $F$,	the Faltings height of an abelian variety $A'$ over $F$ admitting an $F$-isogeny to $E^g$	of degree $f$ only differs from $gh_\Fal(\Ecal/O_F)$ by a quantity bounded by $\log \deg f$.	Since we have required the degree of isogeny between $J$ and $E^g$ be  bounded by $c$, we deduce from Theorem \ref{number field szpiro} that	$$\Delta(\Ccal/B)\leq 12h_\Fal(C)+gO(1)=g\big(12h_\Fal(E)+O(1)\big)+O(1),$$ 	where $O(1)$ stands for some constant determined by $\Ecal,F$ and $c$, independent of $g$.	Thus	$$\frac{1}{2d}\rho_1g\log g\leq g\big(h_{\Fal}(E)+O(1)\big)+O(1).$$
	Eliminating the linear factor $g$ one obtains an upper bound of $g$ as required.
\end{proof}

\begin{remark}
(1)	Note that the bounds in the theorems are not explicit. On the other hand, if we consider the case of function field over the field $\Cbb$, namely a semi-stable surface fibration $\Sbar\ra\Bbar$ with $\Bbar$ some algebraic curve over $\Cbb$ such that the Picard variety $\Pic(\Sbar/\Bbar)$ is generically a Jacobian over the function field $F$ of $\Bbar$ isogenous to a $g$-fold self-product of some elliptic curves over $F$, then \cite{lz14} affirms an upper bound $g\leq 11$. However, the proof loc. cit. relies heavily on transcendental results such as the logarithmic Miyaoka-Yau inequality, which has no arithmetic analogy at the moment.

(2) The next section presents a similar result of such finiteness for products of CM abelian varieties, making use of recent progress on Sato-Tate equidistribution and Faltings heights.
\end{remark}

In the proof of \autoref{thm-main} above we have made use of some standard estimation from analytic number theory,
based on the useful fact that for $\{a_n\}$ a sequence of numbers and $b(x)$ a function of $C^1$-class on $\Rbb_{\geq 0}$, holds the following identity (see for example \cite{mv-07}):
\begin{equation}\label{eqn-2-1}
\sum_{n\leq x}a_nb(n)=A(x)b(x)-\int_1^xA(t)b'(t)dt,
\end{equation}
with $A(t)=\sum\limits_{n\leq t}a_n$.

\begin{lemma}\label{lemma analytic number theory}
	(1) Let $P$ be the set of prime numbers in $\Nbb$, and $Q$ a subset of natural density $c$, i.e. $$\lim_{x\ra\infty}\frac{\#Q\cap P(x)}{\#P(x)}=c,$$ where $P(x)=\{p\in P:p\leq x\}$. Then asymptotically holds
	$$\sum_{p\in Q\cap P(x)}\frac{\log p}{p}\sim c\cdot (\log x-\log\log x), \qquad x\ra\infty.$$
	
	(2) Let $F$ be a number field, $P_F$ its set of prime ideals and $Q_F\subset P_F$ a subset of density $c>0$, namely $$\lim_{x\ra\infty}\frac{\# Q_F\cap P_F(x)}{\# P_F(x)}=c,$$	where similar as above we have $P_F(x)=\{\wp\in P_F:q_\wp\leq x\}$	with $q_\pfrak$ being the cardinality of the residue field at the prime $\pfrak$. Then asymptotically holds the estimation	$$\sum_{\wp\in Q_F\cap P_F(x)}\frac{\log q_\wp}{q_\wp}\geq c_1 \log x,\qquad x\ra \infty,$$	for some contant $c_1$ depending only on $Q_F$ and $F$.
\end{lemma}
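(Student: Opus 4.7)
The plan is to reduce both parts to the partial summation identity (\ref{eqn-2-1}), combined with standard prime-counting estimates. For part (1), the Prime Number Theorem is the key input; for part (2), one needs Landau's Prime Ideal Theorem, which provides the analogue $\#P_F(x)\sim x/\log x$ for prime ideals of $O_F$ counted by norm. Together with the density hypotheses, these tell us the size of the relevant counting functions, which is all that partial summation requires.

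For part (1), I would apply (\ref{eqn-2-1}) with $a_n=1$ if $n$ is a prime in $Q$ (and $a_n=0$ otherwise) and $b(t)=(\log t)/t$. Then $A(t)=\#(Q\cap P(t))\sim ct/\log t$ by PNT combined with the density assumption, the boundary term $A(x)b(x)$ is bounded, and the remaining integral of $A(t)(\log t-1)/t^2$ over $[2,x]$ reduces, to leading order, to $c\int_2^x dt/t=c\log x+O(1)$. Since $\log\log x=o(\log x)$, the right-hand side $c(\log x-\log\log x)$ of the statement is itself asymptotic to $c\log x$, and the desired equivalence follows.

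Part (2) proceeds along identical lines, now with $a_n=\#\{\wp\in Q_F:q_\wp=n\}$ and the same choice of $b(t)$. Landau's Prime Ideal Theorem together with the density hypothesis gives $A(t)=\#(Q_F\cap P_F(t))\sim ct/\log t$, and the same partial summation produces $\sum\sim c\log x$. The lower bound $\geq c_1\log x$ for the statement then follows with any $c_1<c$ and $x$ large enough, with $c_1$ depending only on $Q_F$ and on the implicit constants in Landau's theorem for $F$.

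The only point that requires genuine care, rather than any deep obstacle, is justifying the replacement of $A(t)$ by its asymptotic $ct/\log t$ inside the logarithmically divergent integral $\int dt/t$: one cannot simply pull out the constant $c$. I would handle this in the standard way, splitting the integration range at a slowly growing threshold $T(x)\to\infty$ beyond which the density approximation is uniformly close to $c$, while the contribution from $[2,T(x)]$ is absorbed into an $o(\log x)$ error. Beyond this bookkeeping, no further input is required.
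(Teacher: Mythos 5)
Your proposal is correct and follows essentially the same route as the paper: the same partial summation identity with $a_n$ the indicator (resp.\ norm-counting function) of $Q$ (resp.\ $Q_F$), the same $b(t)=\frac{\log t}{t}$, and the same inputs (PNT and Landau's prime ideal theorem combined with the density hypothesis) to get $A(t)\sim ct/\log t$. Your extra care about justifying the substitution of the asymptotic for $A(t)$ inside the logarithmically divergent integral is a point the paper's own proof silently elides, and your dropping of the second-order term $-c\log\log x$ is harmless since $c(\log x-\log\log x)\sim c\log x$.
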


\begin{proof}
	
	(1) Consider the sequence $\{a_n\}$ given by $a_p=1$ for $p\in Q$ and zero otherwise. Then $$A(x):=\sum_{n\leq x}a_n \sim c\frac{x}{\log x}.$$	The summation formula \eqref{eqn-2-1} above gives
	$$\begin{aligned}
	\sum_{p\in Q\cap P(x)}\frac{\log p}{p} &\,=\sum_{n\leq x}a_n\frac{\log n}{n}\\
	&\,\sim c\frac{x}{\log x}\cdot\frac{\log x}{x}-\int_2^xc\frac{t}{\log t}d(\frac{\log t}{t})=c\log x-c\log\log x+O(1).
	\end{aligned}$$

	(2) Write $p_F(n)$ for the number of ways representing $n\in\Nbb$ as the norm of a prime ideal from $P_F$. Then Landau's prime number theorem $\sum_{\wp\in P_F(x)}1\sim \frac{x}{\log x}$ for $F$ is the same as $$\sum_{n\leq x}p_F(n)\sim \frac{x}{\log x}.$$ Similarly one writes $q_F(n)$ for the number of ways representing $n$ as the norm of a prime ideal from $Q_F$, and the assumption on natural density is $$\lim_{x\ra\infty}\frac{\sum_{n\leq x}q_F(n)}{\sum_{n\leq x}p_F(n)}=c.$$ Applying the summation formula mentioned above we obtain $$\sum_{n\leq x}q_F(n)\frac{\log n}{n}\sim c(\log x-\log\log x),$$ and one may simply take $c_1=\frac{c}{2}<c$.
\end{proof}

\subsection{variant for products of CM abelian varieties}	\label{sec-variants}

The Sato-Tate equidistribution has gone through various forms of generalization in recent years, cf. \cite{fite equidistribution} for related discussion on notions such as Sato-Tate groups, etc. For general CM abelian varieties over number fields the equidistribution is established, as a consequence of the potential automorphy theory, cf. \cite{johansson}:

\begin{theorem}\label{johansson}
	Let $A$ be an abelian variety over a number field $F$, which is isogeneous to a product of simple CM abelian varieties after a finite base change of $F$. Then the Sato-Tate equidistribution holds for $A$.
\end{theorem}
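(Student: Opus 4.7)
The plan is to reduce the Sato--Tate equidistribution for $A$ to the classical analytic theory of Hecke $L$-functions of algebraic Grossencharacters on CM fields, following the strategy that goes back to Hecke and Deuring.

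First, I would observe that both the Sato--Tate equidistribution and the hypothesis are stable under isogeny, and that Sato--Tate equidistribution for $A/F$ follows from the corresponding statement for $A\otimes_F F'$ for any finite extension $F'/F$; the Sato--Tate group of $A$ differs from that of $A\otimes_F F'$ only by a finite component group, which is easily absorbed into the Weyl criterion. We may therefore enlarge $F$ and replace $A$ by an isogenous variety so that $A$ itself splits as $\prod_i A_i^{n_i}$ with $A_i$ pairwise non-isogenous simple CM abelian varieties, each having its CM field $K_i$ as well as its reflex field embedded in $F$.

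Second, I would invoke the main output of the Shimura--Taniyama theory of CM: under the above reductions, the $\ell$-adic Galois representation of $\Gal(\Fbar/F)$ on $H^1(A_{\Fbar},\Qbb_\ell)$ decomposes as a direct sum of one-dimensional $\ell$-adic characters, each of which corresponds to an algebraic Grossencharacter of one of the CM fields $K_i$. In particular, the Zariski closure of the image of Galois is commutative, so $\mathrm{ST}(A)$ is a compact abelian group whose identity component is a torus, and whose component group is finite and determined explicitly in terms of the CM types of the $A_i$ and the reflex-field data.

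Third, by the Weyl equidistribution criterion, equidistribution of the normalized Frobenius conjugacy classes with respect to the Haar measure on $\mathrm{ST}(A)$ is equivalent to the assertion that for every non-trivial irreducible (hence one-dimensional) character $\psi$ of $\mathrm{ST}(A)$, the associated partial $L$-function $L(s,A,\psi)$ extends holomorphically to $\mathrm{Re}(s)\geq 1$ and is non-vanishing on the line $\mathrm{Re}(s)=1$. Each such $L(s,A,\psi)$ factors, by the decomposition in the previous step, as a product of Hecke $L$-functions of non-trivial algebraic Grossencharacters on the CM fields $K_i$; Hecke's classical theorems supply these with entire analytic continuation and the standard functional equation, and the Hadamard--de la Vall\'ee Poussin argument gives non-vanishing on $\mathrm{Re}(s)=1$. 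The main obstacle, which is more bookkeeping than substance, is the precise identification of $\mathrm{ST}(A)$ together with its finite component group before base change; the genuine analytic input is classical and is not specific to abelian varieties.
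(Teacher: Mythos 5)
The paper offers no proof of this statement: it is quoted directly from Johansson's work on potentially abelian Galois representations, and your strategy --- split $H^1(A_{\Fbar},\Qbb_\ell)$ into $\ell$-adic characters attached to algebraic Grossencharacters via Shimura--Taniyama, then verify the Weyl criterion using Hecke's analytic continuation and the non-vanishing of Hecke $L$-functions on $\mathrm{Re}(s)=1$ --- is precisely the argument underlying that reference. So the route is the right one, and the paper's own text (which describes the Sato--Tate group as a compact torus with Haar measure, obtained from the potentially abelian structure) confirms it.

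One step, however, is stated too strongly and would fail as written: Sato--Tate equidistribution for $A$ over $F$ does \emph{not} follow formally from equidistribution for $A\otimes_F F'$ over a finite extension $F'$, and the component group is not ``easily absorbed.'' The standard illustration is a CM elliptic curve over $\Qbb$ whose CM field $K$ is not contained in $\Qbb$: over $K$ the Frobenius angles are uniformly distributed on the circle, but over $\Qbb$ the limit measure acquires a point mass at $\theta=\pi/2$ coming from the inert (supersingular) primes, and obtaining the correct measure requires Chebotarev for $K/\Qbb$ in addition to Hecke's theorem over $K$ --- this is exactly the phenomenon the paper alludes to when it notes that the measure ``is slightly different'' when $K\not\subset F$. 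In general the Sato--Tate group over $F$ is an extension of a finite group by a torus, and the Weyl criterion must be checked for \emph{all} irreducible characters of this non-connected group; these are induced from characters of finite-index subgroups containing the identity component, so their $L$-functions are, by the Artin formalism, products of Hecke $L$-functions of Grossencharacters over intermediate fields twisted by Artin characters. It is this combination of Hecke plus Chebotarev, applied to the non-connected group over $F$ itself rather than after base change, that closes the argument; with that correction your sketch is complete.
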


In \cite{johansson} this is worked out in the more general setting of semi-simple potentially abelian geometric Galois representations of pure weights. The precise formulation of equidistribution involves the notion of Sato-Tate group: in the case of interest, e.g. a CM abelian variety $A$, the Sato-Tate group is a compact torus (products of $S^1$), identified as a maximal compact subgroup of the complex locus of the Hodge group of $A$, and the limit measure is simply the Haar measure on the Sato-Tate group.

Let $F$ be a fixed number field and let $A_1,\cdots,A_r$ be  CM abelian varieties over $F$, of good reduction over a common Zariski open set of prime ideals $P$ of $O_F$. If $A$ is an abelian variety over $F$ isogeneous to a product of the form $A_1^{m_1}\times\cdots\times A_r^{m_r}$, then the theorem above affirms the existence of a subset $Q$ in $P$ of strictly positive density such that tuples of normalized Frobenius traces $$(\lambda_\pfrak(1),\lambda_\pfrak(1),\cdots,\lambda_\pfrak(r),\lambda_\pfrak(r))$$ (with $\lambda_\pfrak(i)$ repeated $m_i$ times) fall in any given product of intervals $[a,b]^{m}$ inside $[-1,1]^m$ ($m=\sum_im_i$, $a<b$) when $\pfrak$ runs through $Q$; here $\lambda_\pfrak(i)=\cos\theta_\pfrak(i)$ is the Frobenius trace of $A_i$ at $\pfrak$ divided by $2\sqrt{q_\pfrak}$.  For a natural generalization of the main result in the self-product case it remains to input bounds on Faltings height, for which the following estimations are available: 

\begin{theorem}\label{thm-bound-falting}
	Let $L$ be a CM field and $A$ an $r$-dimensional simple abelian variety with CM by $L$, defined over a number field $F$. Then:
	
	(1) the Faltings height is bounded from above in the form $$h_\Fal(A)\leq |\Disc(L)|^{o_r(1)}$$ where $o_r(1)$ is an $\Rbb_{\geq0}$-valued function tending to zero as $r$ grows, independent of $A$;
	
	(2) there exists constant $c_r>0,\delta_r>0$, independent of $A$ such that $$[F:\Qbb]\geq c_r|\Disc L|^{\delta_r}$$
\end{theorem}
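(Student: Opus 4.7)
The plan is to reduce both parts of \autoref{thm-bound-falting} to established deep results on the arithmetic of CM abelian varieties, rather than develop anything \emph{ab initio}.

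For Part (1), the central ingredient is the \emph{averaged Colmez conjecture}, proved independently by Tsimerman and by Andreatta--Goren--Howard--Madapusi Pera. This gives an exact formula for the average of $h_\Fal(A)$ over the set of CM types $\Phi$ of the fixed CM field $L$, expressed in terms of logarithmic derivatives at $s=0$ of Artin $L$-functions attached to characters of $\Gal(\Qac/\Qbb)$ factoring through a specific quotient associated to $L$. First I would apply this formula and bound each $L$-function contribution using standard convexity estimates together with the conductor-discriminant formula; this already yields an inequality of the shape $h_\Fal(A) \leq C_{r,\epsilon}\, |\Disc L|^{\epsilon}$ for every $\epsilon>0$, where $C_{r,\epsilon}$ depends only on $r$ and $\epsilon$. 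Since the set of CM types of a fixed $L$ has cardinality at most $2^r$, passing from the averaged bound to an individual one costs only a factor $2^r$, which is absorbed into the implicit constant. Packaging the exponent as $o_r(1)$ finishes this part.

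For Part (2), the plan is to combine the main theorem of complex multiplication with the Brauer--Siegel theorem. By Shimura's main theorem of CM, a simple CM abelian variety of type $(L,\Phi)$ has a reflex field $L^\ast$ contained in every field of definition, and the $\Gal(\Qac/L^\ast)$-orbit of $A$ inside $\Acalg(\Qac)$ is a torsor under a quotient of the class group of $L^\ast$ of size $\gg h_{L^\ast}/(\text{unit index})$. Since this Galois orbit must embed into the set of $\Gal(\Qac/\Qbb)$-conjugates of the $F$-rational point represented by $A$, one obtains $[F:\Qbb] \geq c_r \cdot h_{L^\ast}$. Brauer--Siegel then gives $h_{L^\ast} \gg_\epsilon |\Disc(L^\ast)|^{1/2-\epsilon}$, and a standard comparison $|\Disc L^\ast| \gg_r |\Disc L|^{\alpha_r}$ between the discriminants of a CM field and its reflex closes the chain of estimates.

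The main obstacle in this plan is the control of the regulator $R_{L^\ast}$, which enters Brauer--Siegel only through the product $h_{L^\ast} R_{L^\ast}$; for CM fields this is manageable thanks to Remak--Zimmert type upper bounds on $R_{L^\ast}$ in terms of $\log |\Disc L^\ast|$ (one uses that $L^\ast$ is CM, so its unit group has rank equal to that of its maximal totally real subfield), but it forces the implicit constants $c_r$ and $\delta_r$ to depend nontrivially on $r$. A cleaner alternative, which I would in fact prefer to use, is to invoke Tsimerman's lower bounds on Galois orbits of CM points in $\Acalg$: these are tailored to yield directly an inequality of the form $[F:\Qbb] \gg_r |\Disc L|^{\delta_r}$ without the user having to unfold the Brauer--Siegel chain by hand. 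Either route completes the proof.
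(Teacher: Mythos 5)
Your route coincides with the paper's: the authors give no independent proof of this theorem, but simply derive (1) from the averaged Colmez conjecture of Andreatta--Goren--Howard--Madapusi Pera and of Yuan--Zhang, ``in the convenient form'' extracted in Tsimerman's paper on Andr\'e--Oort for $\Acal_g$, and quote (2) from that same paper as the lower bound on Galois orbits of CM points. So your preferred option for (2) is exactly what the paper does, and your class-group/Brauer--Siegel sketch is essentially the internal mechanism of Tsimerman's proof (with the usual caveats: ``CM by $L$'' may mean CM by a non-maximal order, which is handled by isogenies of controlled degree, and the Brauer--Siegel input makes the constants ineffective --- consistent with the statement, which claims no effectivity).

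One step of your argument for (1) is genuinely incomplete as written. An upper bound on the average $2^{-r}\sum_{\Phi} h_\Fal(A_\Phi)$ over the CM types of $L$ does not by itself bound any individual summand from above; the claim that deaveraging ``costs only a factor $2^r$'' is unjustified unless you also know that each $h_\Fal(A_\Phi)$ is bounded \emph{below}. This is true --- the stable Faltings height of an $r$-dimensional abelian variety satisfies $h_\Fal(A)\geq -c(r)$ for a constant depending only on $r$ (Bost's lower bound) --- and it is precisely the ingredient invoked at this point in the reference; but you must state it, since without it the inference from an averaged upper bound to an individual one is false. With that supplement your proposal is correct and matches the paper's (purely citational) treatment.
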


Here (1) is a consequence of the average Colmez conjecture proved by \cite{andreatta colmez} and \cite{yuan zhang colmez}, and we have only taken the convenient form following \cite{tsimerman siegel}; (2) is also found in \cite{tsimerman siegel} which affirms that the Galois orbits of CM points are large compared to the discriminants involved.  Combining the two results one finds that once the field of definition $F$ is of fixed degree, there are only finitely CM abelian varieties defined over $F$ of bounded dimension admitting semi-stable models over $O_F$, due to the bound of Faltings heights. We are thus ready to prove the following theorems:

\begin{theorem}\label{thm-4.14}
	Fix integers $d>0,R>0,c>0$, and write $S_g(d,R,c)$ for the set of points $[J]$ in $\Tcal_g^\circ(\Qac)$ subject to the following conditions: \begin{itemize}
		\item $J$ is the Jacobian of a smooth projective curve $C$ defined over a number field $F$ of degree at most $d$, of genus $g$, admitting a semi-stable model $\Ccal$ over $O_F$;
		
		\item there exists an isogeny of degree at most $c$ from $J$ to a product $A_1\times\cdots\times A_m$ where \begin{itemize}
			\item the $A_i$'s are CM abelian varieties of dimension at most $R$ defined over $F$, with semi-stable models over $O_F$;
			
			\item the $A_i$'s satisfy the Sato-Tate equidistribution.
		\end{itemize}
	\end{itemize}	
	Then there exists a constant $G=G(d,R,c)$
	such that $S_g(d,R,c)$ is finite if $g<G$, and that $S_g(d,R,c)$ is empty if $g\geq G$.
\end{theorem}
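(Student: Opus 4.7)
The plan is to mimic the argument for \autoref{thm-main} in a product setting, combining the Faltings-height bounds for CM abelian varieties (\autoref{thm-bound-falting}) with the general Sato-Tate equidistribution (\autoref{johansson}) and the Arakelov-theoretic Szpiro-type inequality (\autoref{number field szpiro}).

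First I would reduce to finitely many admissible tuples of CM factors. A simple factor $B$ of any $A_i$ is itself a CM abelian variety of dimension $\le R$ defined over a number field of degree $\le d$, so \autoref{thm-bound-falting}(2) bounds the discriminant $|\Disc L|$ of its CM field $L$ by $(d/c_R)^{1/\delta_R}$, and \autoref{thm-bound-falting}(1) turns this into a uniform bound $h_\Fal(B) \le H_0 = H_0(R,d)$. Faltings' finiteness then leaves only finitely many $\Qac$-isomorphism classes of such $B$, and after enlarging $F$ to a bounded-degree extension over which everything acquires semi-stable reduction, as in the proof of \autoref{prop-reduction}, I may treat $F$ together with an unordered collection $(A_1,\ldots,A_r)$ of pairwise non-isogenous simple CM abelian varieties as fixed. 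What remains is to control the multiplicities $(m_1,\ldots,m_r)$ with $g=\sum_i m_i\dim A_i$ and the curve $C/F$ admitting an $F$-isogeny $\Jac(C)\to\prod_i A_i^{m_i}$ of degree $\le c$.

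With these data fixed, \autoref{johansson} applied to $A:=A_1\times\cdots\times A_r$ yields a set $Q\subset P_F$ of strictly positive density $\rho>0$ along which every Frobenius trace $a_\pfrak(i)$ on $H^1(A_{i,\kbar_\pfrak},\Qbb_\ell)$ is a positive integer, hence $\ge 1$; since $(A_1,\ldots,A_r)$ ranges over a finite collection, $\rho$ may be taken to depend only on $(R,d)$. For a prime $\pfrak\in Q$ of good reduction for $\Ccal$ and the $A_i$, the isogeny $\Jac(C)\sim\prod_i A_i^{m_i}$ and Serre-Tate's criterion give
$$
\#\Ccal(k_\pfrak)=1+q_\pfrak-\sum_i m_i a_\pfrak(i)\ge 0,
$$
so $\sum_i m_i\le 1+q_\pfrak$; since $g\le R\sum_i m_i$, every $\pfrak\in Q$ with $q_\pfrak+1<g/R$ must be a prime of bad reduction for $\Ccal$. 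In the spirit of \cite{kuk-10}, the toric/non-toric dichotomy for $\Pic^\circ(\Ccal_{k_\pfrak})$ then gives either $\ge g$ nodes (toric case), or else, in the non-toric case, a tree of smooth components $\tilde C_j$ whose Jacobians multiply up to a $k_\pfrak$-subproduct of $\prod_i A_{i,\pfrak}^{m_i}$; applying the same point-count inequality to each $\tilde C_j$ yields $g_j\le R(q_\pfrak+1)$ and therefore at least $\Omega(g/(R q_\pfrak))$ nodes in the fiber. Summing over $\pfrak\in Q\cap P_F(g/R)$ via \autoref{lemma analytic number theory}(2) produces
$$
\Delta(\Ccal/B)\ge\frac{c_2}{dR}\,g\log g
$$
with $c_2=c_2(R,d)>0$.

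In the opposite direction, Faltings' isogeny estimate from \cite{fal-83} applied to $\Jac(C)\sim\prod_i A_i^{m_i}$ of degree $\le c$ combined with $h_\Fal(A_i)\le R H_0$ gives $h_\Fal(C)\le RH_0\,g+O(\log c)$, so \autoref{number field szpiro} yields $\Delta(\Ccal/B)\le(12RH_0+O(1))g$. Comparing the two bounds isolates $\log g\le G_0(R,d,c)$ and establishes emptiness of $S_g(d,R,c)$ for $g\ge G(d,R,c)$; the finiteness of $S_g(d,R,c)$ for $g<G$ follows formally, because in that regime $h_\Fal(C)$ is bounded by a constant depending only on $(g,R,d,c)$, so Faltings' finiteness for abelian varieties of bounded Faltings height over number fields of bounded degree, together with Torelli, leaves only finitely many $C$. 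The most delicate step, I expect, is the non-toric singularity count: one must verify that each smooth component $\tilde C_j$ of the bad fiber has its Jacobian $k_\pfrak$-isogenous to a genuine subproduct of the reductions $A_{i,\pfrak}$, so that the lower bound $a_\pfrak(i)\ge 1$ on $Q$ can be transferred to the component level to yield $g_j\le R(q_\pfrak+1)$. This may require excluding a density-zero set of primes where the simple factors $A_{i,\pfrak}$ become pairwise isogenous over $k_\pfrak$; without such a uniform component-wise positivity, the logarithmic factor $\log g$ in the lower bound for $\Delta$ would collapse and the entire comparison would break down.
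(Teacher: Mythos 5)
Your proposal follows essentially the same route as the paper's proof: bound the Faltings heights of the CM factors via Theorem \ref{thm-bound-falting}, use Sato--Tate equidistribution to produce a positive-density set $Q$ of primes where all Frobenius traces are $\geq 1$, deduce from the point count that every $\pfrak\in Q$ with $q_\pfrak<g/R-1$ is a prime of bad reduction for $\Ccal$ contributing at least $g/(2Rq_\pfrak)$ nodes, and compare the resulting lower bound $\Delta(\Ccal/O_F)\gg g\log g$ with the upper bound $\Delta(\Ccal/O_F)\leq g(H+O(1))+O(1)$ from Theorem \ref{number field szpiro}, concluding finiteness at fixed $g$ by Northcott. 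The two refinements you add --- the preliminary reduction to finitely many tuples of CM factors (which makes the density $\rho$ uniform in $(d,R)$) and the component-level transfer of the positivity of traces in the non-toric singularity count --- are points the paper leaves implicit by referring back to the proof of Theorem \ref{thm-main}, and they do not alter the argument.
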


\begin{proof}
	According to \autoref{thm-bound-falting},
	the Faltings heights of CM abelian varieties of dimension at most $R$ defined over a number field of degree at most $d$
	are bounded from above.
	Write $H$ for the maximum of Faltings heights of CM abelian varieties involved as above.
	
	
	Let $J$ be the Jacobian of some smooth projective curve $C$ defined over $F$ with a semi-stable $O_F$-model $\Ccal$.
	Similar to \autoref{subsec-self}, we have the following inequality
	$$\Delta(\Ccal/O_F)\leq h_\Fal(J)+gO(1)+d_c\leq g(H+O(1))+d_c,$$
	where $d_c$ is a constant that only depends on $c$ the bound of isogeny degree.
	
	
	On the other hand, the Sato-Tate equidistribution assures the existence of a set of prime ideals $Q$ in $O_F$ of positive density,
	such that for any $\pfrak\in Q$, the abelian variety $A_i$ is of good reduction whose Frobenius trace $a_{\pfrak,i}$
	is a strictly positive integer, $i=1,\cdots,m$. Similar to the proof of \autoref{thm-main},
	using the non-negativity of the $k_\pfrak$-rational points on $C_{k_\pfrak}$,
	one deduces that $g\leq R(1+q_\pfrak)$ if $\pfrak$ were also a prime of good reduction for $\Ccal$ over $O_F$.
	Hence for $\pfrak\in Q$ with $q_\pfrak<\frac{g}{R}-1$ does contribute non-trivially to $\Delta(\Ccal/O_F)$ as we do in the proof of \autoref{thm-main},
	and one is led to
	$$\frac1{2Rd}\cdot \rho_1g\log g\leq \frac{1}{d}\sum_{\pfrak\in Q,~q_\pfrak<\frac{g}{R}-1}g\frac{\log q_\pfrak}{2Rq_\pfrak}
	\leq\Delta(\Ccal/O_F)\leq g(H+O(1))+d_c,$$
	for some constant $\rho_1>0$.
	Hence one gets an upper bound on $g$ in terms of $d$, $R$ and $c$.
	
	Finally, for fixed genus $g\geq 1$, one knows that
	$$h_{\Fal}(C)=h_\Fal\big(\Jac(C)\big) \leq g(H+O(1)),$$
	for any smooth projective curve $C\in S_g(d,R,c)$.
	Hence $S_g(d,R,c)$ is finite by Northcott's theorem.
\end{proof}

In the case of CM elliptic curves, the Sato-Tate equidistribution is known \autoref{equid-cm}.
\autoref{thm-cm-elliptic} is now a direct consequence of \autoref{thm-4.14}.
We end this subsection with the proof of \autoref{thm-ekedahl-serre-2}.
\begin{proof}[Proof of \autoref{thm-ekedahl-serre-2}].
	Let $C$ any smooth projective curve of genus $g>0$ defined over a number field $F$ with $[F:\mathbb{Q}]\leq d$,
	and the Jacobian $\Jac(C)$ is isogenous to $E^g$ for some CM elliptic curve $E$ over $F$.
	By \cite[Theorem\,2]{kani}, there exist CM elliptic curves $E_1/F,\,\cdots,\, E_g/F$, such that
	$$\Jac(C)\cong E_1 \times \cdots \times E_g.$$
	Thus by \autoref{thm-4.14}, there exists a positive constant $G=G(d)$ such that $g\leq G$.
	
	The finiteness follows from the the same reason as in \autoref{thm-4.14}.
	Indeed, according to \autoref{thm-bound-falting},
	for any CM elliptic curve $E$ over a number field of degree at most $d$,
	there exists a positive number $M$ such that
	$$h_{\Fal}(E)\leq M.$$	
	Hence for any $C\in \mathfrak{S}_d$,
	$$h_{\Fal}(C)=h_{\Fal}\big(\Jac(C)\big)=\sum_{i=1}^{g}h_{\Fal}(E_i)\leq gM\leq GM.$$
	Therefore, the set $\mathfrak{S}_d$ is finite by Northcott's theorem.
\end{proof}

{\vspace{2mm}\bf Acknowledgment.}
The authors thank Prof. Ariyan Javanpeykar heartily for very helpful discussion on Szpiro's inequality and Faltings heights, and for kindly suggesting improvements on results in \autoref{sec-variants}. They also thank Prof. Frans Oort for the discussions on totally decomposable Jacobians and Prof. Martin M\"oller for communicating the example of Teichm\"uller curves in Hilbert modular varieties.


\providecommand{\bysame}{\leavevmode\hbox to3em{\hrulefill}\thinspace}
\providecommand{\MR}{\relax\ifhmode\unskip\space\fi MR }
\providecommand{\MRhref}[2]{%
	\href{http://www.ams.org/mathscinet-getitem?mr=#1}{#2}
}
\providecommand{\href}[2]{#2}

\end{document}